\documentclass[11pt]{article}
\usepackage[authoryear]{natbib}

\usepackage[margin=1in,footskip=0.25in]{geometry}

\usepackage{xcolor}
\usepackage{adjustbox}
\usepackage{comment}
\usepackage{xcolor}
\usepackage{setspace}
\usepackage{placeins,paralist}
\usepackage[inline]{enumitem}
\usepackage{amsthm,amsfonts}
\usepackage{mathtools, nccmath}
\usepackage{natbib}
\usepackage{graphicx, booktabs, multirow}
\usepackage[ruled,vlined]{algorithm2e}
\usepackage[hypcap=false]{caption}
\usepackage[toc,page,title]{appendix}
\usepackage{mathrsfs}
\usepackage{url}
\usepackage{hyperref}

\newtheorem{property}{Property}

\newtheorem{theorem}{Theorem}[section]
\newtheorem{lemma}[theorem]{Lemma}
\theoremstyle{definition}

\newlist{choices}{enumerate*}{1}
\setlist[choices]{itemsep = 1.125in, label=(\Alph*)}

\usepackage{authblk}

\title{\Large\bf The Stochastic Team Orienteering Problem}
\author[1]{Alberto Guastalla}
\author[2]{Jean-François Côté}
\author[1]{Roberto Aringhieri}

\affil[1]{\footnotesize Department of Computer Science, University of Turin, Italy}

\affil[2]{\footnotesize CIRRELT, Université Laval, Québec, Canada}

\begin{document}

\maketitle

\begin{abstract}
This paper analyses the Stochastic Team Orienteering Problem (STOP), a stochastic variant of the Team Orienteering Problem (TOP). In the STOP, travel times are represented by random variables. The objective is to determine a set of routes that maximises the expected collected profit.
We model the problem as a two-stage stochastic integer program and propose an exact solution method based on the Integer L-shaped method. We also consider a set of non-linear chance constraints to restrict the search to highly reliable routes.
In the first-stage, a set of routes is selected, while in the second-stage the recourse evaluates the expected profit of the selected routes. We present a new set of optimality cuts and strengthen the formulation by considering valid inequalities and constraints liftings. Computational results provide a comprehensive analysis of the algorithm using the standard TOP benchmark dataset from the literature.
\end{abstract}


\section{Introduction}
\label{sec:introduction}

The Team Orienteering Problem (TOP) is a well-known combinatorial optimisation problem defined on a complete directed graph in which each node has an associated profit and each arc has an associated travel time. The objective is to find a fixed set of routes that collects the maximum total profit from the visited nodes without exceeding a deterministic time budget on any route. Each route must start at the source node and end at the destination node.

Travel times are often subject to uncertainty caused by factors such as weather conditions, traffic congestion, or other unforeseen events. As a result, a solution derived from the TOP model may become infeasible or suboptimal when implemented. In this paper, we address the challenge of constructing a TOP solution under the assumption that the probability distribution of the uncertain travel times is known beforehand. It is worth noting that, although we are dealing with continuous probability distributions, we do not employ scenario-generation procedures to estimate the value of an optimal solution. The purpose of this work is to introduce the Stochastic Team Orienteering Problem (STOP) as a two-stage stochastic integer program with chance constraints that explicitly account for route reliability. The first-stage is adapted from the flow-based formulation reported in~\citet{Guastalla2025}, while the second-stage (the recourse problem) evaluates the expected profit associated with a specific first-stage solution. The objective of the STOP is to maximise the expected profit collected from the selected routes.
We summarise our contributions as follows:
\begin{inparaenum}[(i)]
\item we present the STOP as a two-stage stochastic integer linear problem;
\item we propose an exact algorithm based on the Integer L-shaped method. To the best of our knowledge, this is the first study to employ the L-shaped method in a routing problem with a maximisation objective;
\item we introduce a new set of optimality cuts designed to guarantee the algorithm's convergence;
\item we present a new family of valid inequalities for maximisation problems that we call Upper-Bounding Functionals (UBFs);
\item we introduce a set of valid inequalities to strengthen the objective function value and propose two constraints liftings to further tighten the formulation of the first-stage problem.
\end{inparaenum}


The paper is organised as follows.
Section~\ref{sec:literature} provides a comprehensive review of the literature, with a focus on the most relevant stochastic applications of Orienteering Problems (OPs) and the Integer L-shaped method.
Section~\ref{sec:problem} presents the problem definition for the STOP along with the mathematical formulation.
Section~\ref{sec:algorithm2} presents the exact algorithm for solving the STOP. 
Finally, Section~\ref{sec:analysis} presents the computational results used to evaluate the algorithm’s performance on the TOP benchmark dataset and to provide additional comparisons.
%
%
Finally, Section~\ref{sec:end} concludes the paper.

\section{Literature review}
\label{sec:literature}

This section provides a comprehensive review of the most relevant and recent studies related to the STOP. It is organised into two parts. The first part examines stochastic applications of Orienteering Problems (OPs), while the second part focuses on the Integer L-shaped solution method.



\subsection{Applications}

\citet{Campbell2011} introduced the Orienteering Problem with Stochastic Travel and Service Times, a variant of the orienteering problem in which travel and service times follow specified probability distributions. In this setting, a reward is earned when a commitment to a customer is fulfilled by the end of the day, whereas failing to fulfil the commitment results in a penalty. This model captures the operational challenge faced by companies that may have more customers than they can serve in a single day. The authors identified specific instances that can be solved exactly and showed how existing variable neighbourhood search heuristics can be applied to more general cases.

\citet{Evers2014} investigated the Orienteering Problem with Stochastic Weights to account for uncertainty arising in applications such as logistics and tourism. The authors formulated a two-stage stochastic integer program with the objective of maximising the expected collected profit. To address the nonlinearity of the expected profit, they introduced a linearisation method to estimate the total profit achievable for a given route across different weight-realisation scenarios, thereby enabling the application of the sample average approximation method. Although the solution algorithm asymptotically converges to the optimal solution of the two-stage model, it can be computationally expensive. To handle larger instances, they developed a heuristic approach that exploits the problem structure and explicitly addresses the associated uncertainty.


More recently, \citet{Angelelli2017} introduced the Probabilistic Orienteering Problem (POP), a variant of the OP defined on a directed graph where each arc has an associated cost, and each node offers a prize which is only available with a certain probability. A vehicle departs from a fixed source with a limited budget to visit a subset of customers and must return to a designated destination. In the first-stage, a subset of customers is chosen and a corresponding path is planned, ensuring that the vehicle can visit all selected customers and still reach the destination within the given budget. Once the availability of each customer in the subset is revealed, the vehicle follows the initial path, omitting the customers that are unavailable. The objective of the problem is to determine a first-stage solution that maximises the expected profit of the second-stage path, defined as the difference between the expected total prize and the expected total cost. 



In contrast, \citet{Thayer2021} introduced a variant of the OP incorporating chance constraints. In this paper, the total travel time of a route is modelled as a random variable, and chance constraints are incorporated to bound the probability that this total travel time exceeds a predefined time budget. The authors formulated this problem as a constrained Markov decision process and applied a Lagrangian approach to solve it.
%
%
Experimental evaluation shows that their method significantly outperforms previous solutions based on linear programming techniques.

\citet{MONTEMANNI2025106947} further investigated the POP (initially introduced by \citet{Angelelli2017}) proposing an iterative model-based algorithm that solves a sequence of deterministic subproblems. The algorithm is capable of identifying and certifying optimal solutions within a given time. Computational experiments show that the proposed method performs competitively when compared with both exact and heuristic algorithms from the existing literature.

\citet{PISINGER2026107365} introduced a two-stage stochastic variant of the TOP in which customers are classified in two categories. The former, are subscription customers that, if selected, must be served in every scenario. The latter are stochastic on-demand customers, which are optional in each scenario. The objective is to select a subset of subscription customers is such a way to maximise the expected total profit from both subscription and on-demand customers. To address this problem, the author proposed two versions of a stochastic metaheursitic approach based on large neighborhood search methodology. The first one, the distributed approach, a high-level local search explores the first-stage decisions, while multiple lower-level local searches independently optimise each scenario. The second one, the integrated approach, employs a single local search procedure which simultaneously handles both first and second-stage decisions.
Computational experiments show that the distributed approach consistently produces better solutions in less time.

Finally, to the best of our knowledge, the only two studies addressing the STOP are \citet{Panadero2017} and \citet{Panadero2023}.
In both articles, the authors proposed a simheuristic approach that integrates biased-randomised heuristics with a variable neighbourhood search framework and Monte Carlo simulation. In contrast, the present paper introduces an exact solution method.


\subsection{Integer L-shaped method}
\label{ssec:literature:ILM}

The Integer L-shaped Method (ILM) was introduced by ~\citet{LAPORTE1993133} as an extension of Benders decomposition to solve binary two-stage stochastic programs by decomposing them into a Master Problem (MP) and a Sub-Problem (SP). The key idea is to compute the objective function of the MP with linear inequalities, constructed iteratively using Benders cuts (feasibility or optimality cuts). Initially, the MP represents a relaxed representation of the first-stage problem. After solving the MP to obtain an integer solution, such a solution is passed to the SP which evaluates its feasibility and computes the recourse cost. If the SP is infeasible, a feasibility cut is generated to eliminate such a solution from the MP. If feasible, the SP generates an optimality cut that tightens the true objective of the MP. Then, the MP is solved again. This iterative process continues until convergence is achieved. Within the stochastic vehicle routing problems literature, we identify three families of optimality cuts, hereafter referred to as: (i) solution-based (or classical) optimality cuts, (ii) route-based optimality cuts, and (iii) customer-based optimality cuts. 



For solution-based optimality cuts, the recourse function is replaced by an auxiliary variable that is bounded using optimality cuts. Each cut is designed to be active for a single first-stage solution. Its first use in a vehicle routing problem is due to \citet{Gendreau1995} who studied a stochastic vehicle routing problem in which each customer is uncertain to be present and has an uncertain demand. Later developments to improve the efficiency are due to the introduction of Lower-Bounding Functionals (LBFs) which bounds the auxiliary variable for more than a single solution. \citet{Hjorring1999} proposed a LBF, named \textit{partial route inequalities}, that are based on the notion of \textit{partial route} which represents an ordered sequence of unordered groups of nodes separated by single nodes. These were extended by \cite{Laporte2002} to the multiple vehicles case, generalised by \cite{Jabali2014}, \cite{salavati2019exact} and \cite{Hoogendoorn2023}. The important drawback of these optimality cuts is that we are required to add a considerable amount of them to obtain convergence.

The route-based optimality cuts extends the classical ones by being defined for each route of a solution. The recourse problem is replaced by a sum of auxiliary variables, one such variable for each customer, and each variable represents the recourse of a route which starts exactly by serving such a customer. These optimality cuts were proposed, but not tested, by \citet{Seguin1994} for a stochastic vehicle routing problem. It was later tested by \citet{Cote2020} on a different stochastic vehicle routing problem, which demonstrated their usefulness for solving more problems efficiently. The main advantage is that a cut is active for all solutions that contain a specific route. This improve by at least one order of magnitude the computation times. More recently, \citet{Hoogendoorn2023} extended the \textit{partial route inequalities} by introducing the \textit{partial route-split} and \textit{multi-route-split inequalities} which also helps the convergence of the proposed ILM.

The customer-based optimality cuts are designed to bound the recourse of paths, which do not require to be connected to the depot. While a route-based optimality cut is only active for solutions that contain the same route, a customer-based optimality cut is active for all solutions that contain such a path. They have the advantage of being active for a broader range of solutions. The recourse function is replaced as well by a sum of auxiliary variables, each one associated to a customer. In this case, each variable can be interpreted as the customer's contribution to the recourse.
%
%
The customer-based optimality cuts were introduced by \citet{Lucas2024} as part of the Disaggregated ILM (DILM) for solving two-stage stochastic integer programs in which first-stage solutions can be decomposed into disjoint components, each one having a monotonic recourse function. In minimisation problems, the monotonicity property requires that the recourse cost of a component is always greater than or equal to the recourse of any of its subcomponents. Exploiting this structure, the authors proposed these optimality cuts, and a new type of LBFs, which are valid if the recourse function is monotonic. Computational results on benchmark instances demonstrated that the method achieves state-of-the-art performance.
Following this line, \citet{Legault2025} corrected \citet{Lucas2024} and showed that a formulation based on the DILM is valid only if the recourse function is superadditive. This requires that the recourse cost of a component must be greater than or equal to the sum of recourse costs of two disjoint subcomponents, in a minimisation problem. Their approach integrates a new set of LBFs that extends the ones introduced in~\citet{Lucas2024}, yielding consistent computational improvements. Numerical experiments confirmed substantial performance gains over the best-known ILMs, certifying the optimality of several open instances.


\section{Problem definition and mathematical formulation}
\label{sec:problem}

We formulate the STOP as a two-stage stochastic integer program. In the first-stage, the decisions involve selecting which nodes to visit and how to assign them to different routes in a specific sequence. The second-stage problem evaluates the solution provided by the first-stage, represented as a set of $m$ routes, by computing its corresponding expected profit.
Finally, we assume an infinite scenario space, as the uncertainty parameters $\xi_{ij}$, representing the travel times between nodes $i$ and $j$, are modelled as continuous independent random variables.


The STOP is formulated on a directed and complete graph $G=(N,A)$, where $N$ is the set of nodes denoted from $1$ (the source or depot) to $n$ (the destination), and $A$ is the set of directed arcs across the nodes in $N$ ($|N|=n$). A path $w$ in a graph $G$ is a sequence of distinct nodes represented by a set of arcs. If $w$ includes neither the source nor the destination node, it is referred to as a customers path. A sub-path of $w$ is a contiguous subsequence of nodes in $w$, eventually $w$ itself. A route is a path where the first node is $1$ and the last node is $n$. The route $r_h$ is said to perform a customers path $h$ if it visits all nodes of $h$ consecutively.

Each arc $(i,j) \in A$ is associated with a Normal independent random variable $\xi_{ij} \sim \mathcal{N}(\mu_{ij}, \sigma_{ij}^2)$ to represent its uncertain travel time. We assume that the triangle inequality holds for both expectations $\mu_{ij}$ and variances $\sigma_{ij}^2$ which ensures that all the sub-paths of a feasible path are feasible.
Set $\hat{N}$ denotes the set of customers and is obtained from $N$ by removing the source and destination nodes. Similarly, $\hat{A}$ is obtained from $A$ by removing all arcs incident to those nodes. Sets $N(w)$ and $A(w)$ denote the set of nodes and arcs that compose path $w$. Sets $\hat{N}(w)$ and $\hat{A}(w)$ correspond to the sets $N(w)$ and $A(w)$, without considering the source and destination nodes for $\hat{N}(w)$ and the arcs connected to them for $\hat{A}(w)$. Each customer $k \in \hat{N}$ is associated with a deterministic non-negative profit $p_k$.

The goal of the STOP is to find a set of $m$ routes that maximise the total expected profit collected while respecting a set of reliability constraints implemented as chance constraints in order to take into account a reliability measure for each route as in~\citet{Thayer2021}.

In the following, we state some useful definitions. The \emph{route uncertain travel time} $\xi_r = \sum_{(i,j) \in A(r)} \xi_{ij}$ is defined as the sum of the random variables corresponding to the arcs composing the route $r$. Accordingly to the closure property of the Normal distribution with respect to the sum, the random variable $\xi_r$ is still distributed as a Normal random variable, i.e.,  $\xi_r \sim \mathcal{N}(\mu_r, \sigma^2_r)$, with the \emph{route expectation} $\mu_r =\sum_{(i,j) \in A(r)} \mu_{ij}$, and \emph{route variance} $\sigma^2_r = \sum_{(i,j) \in A(r)} \sigma^2_{ij}$.
%
%
%
The \emph{route reliability} $\delta_r = \mathbb{P}(\xi_r \leq T_{\max})$ is defined as the probability that $\xi_r$ is less than or equal to the deterministic time budget $T_{\max}$. A route $r$ is said to be feasible if $\delta_r \geq \alpha$ where $\alpha$ is a probability threshold.
The recourse $\mathcal{Q}(r)$ of route $r$ is the product of its reliability and its total collected profit collected:
\begin{equation}
\label{route_recourse}
\mathcal{Q}(r) = \delta_r \sum_{k \in \hat{N}(r)} p_k.
\end{equation}
%

%
%
We propose a compact two-index formulation by leveraging the flow-based formulation presented in~\cite{Guastalla2025} that makes use of the following decision variables:
\begin{inparadesc}
\item$y_{k}$ is equal to $1$ if and only if the node $k \in \hat{N}$ is visited, $0$ otherwise;
\item$x_{ij}$ is equal to $1$ if and only if the arc $(i,j) \in A$ is traversed, $0$ otherwise ($x_{1n}$ is an integer variable representing the number of routes that do not serve any customer);
\item $z_{ij}$ represents the expected arrival time at node $j$ coming from node $i$ and can be thought as the amount of flow that passes through the arc $(i,j) \in A$.
\end{inparadesc}
The mathematical formulation is as follows.


\allowdisplaybreaks{
	\begin{subequations}
		\begin{align}
			\max \quad & {\mathcal{Q}(x)}\label{eq:2STOPof}\\
			\textrm{s.t.}
			\quad & \sum_{(1,j) \in A} x_{1j} = \sum_{(i,n) \in A} x_{in} = m, \label{eq:2STOPstartend}\\
			\quad & \sum_{(i,k) \in A} x_{ik} = \sum_{(k,j) \in A} x_{kj} = y_k, \quad \, k \in \hat{N}, \label{eq:2STOPconnectivity}\\
			\quad & \sum_{(k,j) \in A} z_{kj} - \sum_{(i,k) \in A} z_{ik} = \sum_{(k,j) \in A} \mu_{kj} x_{kj}, \quad \, k \in \hat{N}, \label{eq:2STOPflow}\\
  \quad & \mathbb{P} \left(\sum_{(i,j) \in A(r)} \xi_{ij} x_{ij} \leq T_{\max} \right) \geq \alpha, \quad r \in \mathcal{R}(x),\label{eq:2chanceconstraints}\\
			\quad & z_{ij} \leq (T_{\max} - \mu_{jn}) \, x_{ij}, \quad \, (i,j) \in A, \label{eq:2STOPUB}\\
			\quad & z_{ij} \geq (\mu_{1i} + \mu_{ij}) \, x_{ij}, \quad \, (i,j) \in A, \label{eq:2STOPLB}\\
			\quad & z_{1k} = \mu_{1k} x_{1k}, \quad \, k \in \hat{N}, \label{eq:2STOPDepot}\\
			\quad & y_{k} \in \{0, 1\}, \quad \, k \in \hat{N}, \label{eq:2STOPdef1}\\
			\quad & x_{ij} \in \{0,1\}, \quad \, (i,j) \in A \setminus \{(1,n)\}, \label{eq:2STOPdef2}\\
			\quad & z_{ij} \in \mathbb{R^+}, \quad \, (i,j) \in A,\label{eq:2STOPdef3}\\
  \quad & x_{1n} \in \mathbb{N}^+.\label{eq:2STOPdef4}
		\end{align}
	\end{subequations}
}


The objective function~\eqref{eq:2STOPof} maximises the recourse value, which corresponds to the expected profit collected by the selected routes defined as $\mathcal{Q}(x^{\nu}) = \sum_{r \in \mathcal{R}(x^\nu)} \mathcal{Q}(r)$. We denote with $\mathcal{R}(x^{\nu})$ the set of non-empty routes contained in the integer first-stage solution $x^{\nu}$ (an empty route consists solely of the arc $(1,n)$ and does not visit any customer).
Constraint~\eqref{eq:2STOPstartend} guarantees that each route starts from the source node and ends at the destination node.
Constraints~\eqref{eq:2STOPconnectivity} impose the connectivity of each route. Constraints~\eqref{eq:2STOPflow} and~\eqref{eq:2STOPUB} are the classical Gavish-Graves (GG) Subtour Elimination Constraints (SECs) adapted for the TOP~\citep{Gavish1978}. Constraints~\eqref{eq:2STOPLB} set the lower bound on the route expectation.
Constraints~\eqref{eq:2STOPDepot} bound the flow originating from the initial depot. Chance constraints~\eqref{eq:2chanceconstraints} eliminate all solutions containing routes associated with a route reliability less than $\alpha$. Finally,~\eqref{eq:2STOPdef1},~\eqref{eq:2STOPdef2},~\eqref{eq:2STOPdef3} and~\eqref{eq:2STOPdef4} define the domain of the variables.
%
%
%
%
To obtain a linear formulation, we relax~\eqref{eq:2chanceconstraints}, since these constraints are inherently non-linear. We refer to this formulation as $\mathscr{F}$. Feasibility cuts, introduced in Section~\ref{algorithm2}, are dynamically added to $\mathscr{F}$ during the resolution to ensure that the solution satisfies such chance constraints. The following lemma is stated:

\begin{lemma}
	\label{lem:lemma1}
	Constraints~\eqref{eq:2STOPflow} and~\eqref{eq:2STOPUB} guarantee that every route contained in a feasible integer solution of the first-stage problem has a route reliability greater than or equal to 0.5.
\end{lemma}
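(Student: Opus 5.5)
Take an integer feasible solution $x$ of $\mathscr{F}$ and a non-empty route $r \in \mathcal{R}(x)$, say $r = (1=v_0, v_1, \dots, v_q, v_{q+1}=n)$. We show that $\mu_r \le T_{\max}$ and then use the symmetry of the Normal distribution. Since $\xi_r \sim \mathcal{N}(\mu_r,\sigma_r^2)$, we get $\delta_r = \mathbb{P}(\xi_r \le T_{\max}) \ge \mathbb{P}(\xi_r \le \mu_r) = 0.5$. If $\sigma_r = 0$ the variable is degenerate and $\delta_r = 1$. So the lemma reduces to the deterministic statement that the expected duration of every route is at most $T_{\max}$.

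To get this bound, first interpret the flow variables along $r$. Constraints~\eqref{eq:2STOPconnectivity} and the integrality of $x$ make each customer $v_t$ have exactly one active incoming arc $(v_{t-1},v_t)$ and one active outgoing arc $(v_t,v_{t+1})$. By~\eqref{eq:2STOPUB}, $z_{ij}=0$ whenever $x_{ij}=0$. Hence the sums in~\eqref{eq:2STOPflow} at $v_t$ collapse to a single term each, giving $z_{v_t v_{t+1}} - z_{v_{t-1}v_t} = \mu_{v_t v_{t+1}}$. Combined with~\eqref{eq:2STOPDepot}, $z_{1 v_1} = \mu_{1 v_1}$, a telescoping induction on $t$ gives
\[
z_{v_{t}v_{t+1}} = \sum_{s=0}^{t} \mu_{v_s v_{s+1}}, \qquad t=0,\dots,q .
\]
In particular, $z_{v_q n}$ equals the full route expectation $\mu_r$. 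Because the route's arcs are distinct, this uses only the disjointness guaranteed by integrality. Constraints~\eqref{eq:2STOPflow} also exclude subtours (the standard GG argument), so $r$ really is a path from $1$ to $n$.

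Next apply~\eqref{eq:2STOPUB} to the last active arc. For $(v_q,n)$ it reads $z_{v_q n} \le T_{\max} - \mu_{nn}$, where $\mu_{nn}=0$ by convention: there is no travel from $n$ to itself. This gives $\mu_r \le T_{\max}$ directly.

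Alternatively, apply~\eqref{eq:2STOPUB} to the penultimate arc $(v_{q-1},v_q)$: $z_{v_{q-1}v_q} \le T_{\max} - \mu_{v_q n}$, which rearranges to $\mu_r = z_{v_{q-1}v_q} + \mu_{v_q n} \le T_{\max}$. When $q=1$, the penultimate arc is $(1,v_1)$, and~\eqref{eq:2STOPDepot} together with~\eqref{eq:2STOPUB} gives $\mu_{1v_1} + \mu_{v_1 n} \le T_{\max}$. This second route avoids depending on how $\mu_{nn}$ is defined.

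The main obstacle is this bookkeeping at the route's endpoints. One must check which arc's upper bound encodes the full route expectation, and handle the source/destination conventions: whether~\eqref{eq:2STOPflow} is imposed at $n$, and the meaning of $\mu_{jn}$ when $j=n$. The argument via the penultimate arc settles this. With $\mu_r \le T_{\max}$ established, the Normal symmetry step closes the proof.
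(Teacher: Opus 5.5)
Your proof is correct and follows the paper's approach: derive $\mu_r \le T_{\max}$ from the Gavish--Graves flow constraints, then use the symmetry of the Normal distribution about its mean. The paper asserts $\mu_r \le T_{\max}$ in a single line, and your telescoping argument makes that step explicit, including the use of \eqref{eq:2STOPDepot} to fix the flow on the first arc, which \eqref{eq:2STOPflow} and \eqref{eq:2STOPUB} alone do not supply.
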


\begin{proof}
The constraints~\eqref{eq:2STOPflow} and~\eqref{eq:2STOPUB} imply that every integer solution satisfies $\mu_r \leq T_{\max}$ for each route $r$. Then, for any value of $\sigma^2_r$, the route reliability $\delta_r$ will be greater than or equal to 0.5 because the Normal distribution is symmetric with respect to its expectation.
\end{proof}

We also present an additional three-index mathematical formulation for the first-stage problem of the STOP that incorporates the chance constraints directly inside the model. However, due to its inferior computational performance compared to the formulation above, we have included it in the supplementary material.

\section{The Integer L-shaped method}
\label{sec:algorithm2}

We present in the following an ILM for solving formulation $\mathscr{F}$. We start by defining three new families of optimality cuts: (i) \emph{solution-based}, (ii) \emph{route-based}, and (iii) \emph{customer-based}. Next, we present the Upper-Bounding Functionals (UBFs), which operate analogously as Lower-Bounding Functionals, but are for maximisation problems. Moreover, we propose a set of upper bounds and valid inequalities. Finally, we introduce two constraints liftings to strengthen our first-stage formulation.
We note that the proofs of validity of the proposed optimality cuts and UBFs are reported in the supplementary material.

\subsection{Solution-based optimality cuts}
\label{ssec:solution-based}

In the solution-based optimality cuts, $\mathcal{Q}(x)$ is replaced by an auxiliary variable $\theta$, which represents the recourse of a first-stage solution.
Since each cut is defined for a specific first-stage solution, these optimality cuts must include the arcs connected to the source and destination nodes.
%
The following optimality cut for a first-stage solution $(x^{\nu}, \theta^\nu)$ is an adaptation of the one introduced by~\citet{Gendreau1995}:

\begin{equation}
\label{eq:optcuts1}
\theta \leq \mathcal{Q}(x^{\nu}) \overbrace{\left(\sum_{r \in \mathcal{R}(x^{\nu})} \left [ \sum_{(i,j) \in A(r)} x_{ij} - |A(r)|\right ] + 1 \right)}^{t_1} + \, \text{P}^m_{\text{UB}} \overbrace{\left(\sum_{r \in \mathcal{R}(x^{\nu})}\left [ |A(r)| - \sum_{(i,j) \in A(r)} x_{ij} \right ] \right)}^{t_2},
\end{equation}
where $\text{P}^m_{\text{UB}}$ is an upper bound on the expected profit of any first-stage solution. It is described in Section~\ref{ssec:bounds}.
%
Additionally, we introduce the following inequality to bound the problem:
\begin{equation}
\label{bound_sol_based_opt_cut}
\theta \leq \sum_{k \in \hat{N}} y_{k} \cdot p_k \cdot \mathbb{P}(\xi_{1k} + \xi_{kn} \leq T_{\max}).
\end{equation}
The right-hand side of inequality \eqref{bound_sol_based_opt_cut} provides an upper bound on the total expected profit assuming that each customer is visited by a single route.

We say that optimality cut~\eqref{eq:optcuts1} is active in solution $(x^\nu, \theta^\nu)$ if and only if $t_1 = 1$ and $t_2 = 0$. In this case, it follows that $\theta \leq \mathcal{Q}(x^{\nu})$.
Conversely, it does not bound $\theta$ (see the proof in the supplementary material). Finally, we state the following lemma.

\begin{lemma}
\label{lem:lemma2}
The value of $\theta^*$ is equal to $\mathcal{Q}(x^*)$ in any optimal solution $(x^*, \theta^*)$.
\end{lemma}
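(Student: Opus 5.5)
The plan is to read the statement as a claim about the relaxed master problem. This is formulation $\mathscr{F}$ with $\mathcal{Q}(x)$ replaced by $\theta$, and augmented with the feasibility cuts, the optimality cuts~\eqref{eq:optcuts1} and the bound~\eqref{bound_sol_based_opt_cut}. We assume, as the Integer L-shaped method ensures at termination, that the cut~\eqref{eq:optcuts1} associated with the optimal first-stage solution $x^*$ has been generated. The argument then has two parts: an upper bound $\theta^*\le\mathcal{Q}(x^*)$ coming from the cut for $x^*$, and the reverse inequality $\theta^*\ge\mathcal{Q}(x^*)$ coming from optimality, i.e.\ maximisation of $\theta$.

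For the upper bound, I would evaluate the cut~\eqref{eq:optcuts1} written for $x^\nu=x^*$ at the point $x^*$ itself. For every $r\in\mathcal{R}(x^*)$ and every $(i,j)\in A(r)$ we have $x^*_{ij}=1$. Hence $\sum_{(i,j)\in A(r)}x^*_{ij}=|A(r)|$, so $t_1=1$ and $t_2=0$, and the cut reduces to $\theta\le\mathcal{Q}(x^*)$. One subtlety to check here is the arc $(1,n)$, whose variable is integer and may exceed $1$. Empty routes are excluded from $\mathcal{R}(x^*)$, though, so this arc never appears in $A(r)$ and the count is exact.

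For the lower bound, I would show that no other constraint of the master problem forces $\theta$ strictly below $\mathcal{Q}(x^*)$ at $x=x^*$. Once that holds, maximality of $\theta^*$ given $x^*$ yields $\theta^*=\mathcal{Q}(x^*)$. Two families of constraints need checking.
\begin{enumerate}[label=(\roman*)]
\item The bound~\eqref{bound_sol_based_opt_cut}. Take a route $r=(1,k_1,\dots,k_\ell,n)$. By the triangle inequality on means and variances, $\xi_{1k}+\xi_{kn}$ has mean and variance no larger than those of $\xi_r$, for any $k$ on $r$. Since both reliabilities are at least $0.5$ by Lemma~\ref{lem:lemma1}, $\delta_r$ is at most $\mathbb{P}(\xi_{1k}+\xi_{kn}\le T_{\max})$. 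Summing $p_k$ times this inequality over customers gives $\mathcal{Q}(x^*)\le$ RHS of~\eqref{bound_sol_based_opt_cut}.
\item Every other optimality cut~\eqref{eq:optcuts1}, generated for some $x^\nu\neq x^*$. Here the route sets differ, so some arc of some route of $x^\nu$ is not used in $x^*$. That gives $t_2=s\ge1$ and $t_1=1-s$, so the right-hand side equals $\mathcal{Q}(x^\nu)(1-s)+\text{P}^m_{\text{UB}}s$. This is at least $\text{P}^m_{\text{UB}}\ge\mathcal{Q}(x^*)$, because $\text{P}^m_{\text{UB}}\ge\mathcal{Q}(x^\nu)$ and $\text{P}^m_{\text{UB}}$ bounds every solution.
\end{enumerate}

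The main obstacle is step (i) and the precise role of the upper bound $\text{P}^m_{\text{UB}}$. Step (i) needs the monotonicity argument that a single-customer route is at least as reliable as any route containing that customer. I would do this via the standard normal CDF: for $T_{\max}\ge\mu$, the quantity $\Phi\bigl((T_{\max}-\mu)/\sigma\bigr)$ is nonincreasing in both $\mu$ and $\sigma$. Step (ii) depends on $\text{P}^m_{\text{UB}}$ being a valid bound, which I would take from Section~\ref{ssec:bounds}. Finally, the lemma should be stated for the master problem at convergence, i.e.\ with the cut for $x^*$ present, and I would make that explicit.
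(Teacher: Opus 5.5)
Your decomposition matches the paper's one-line proof: the cut \eqref{eq:optcuts1} for $x^*$ is the only one active at $x^*$, and maximisation of $\theta$ does the rest. Your upper-bound step, step~(i), and the computation $\mathcal{Q}(x^\nu)(1-s)+\text{P}^m_{\text{UB}}s\ge\text{P}^m_{\text{UB}}$ for $s\ge1$ are all correct.

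The gap is the first sentence of step~(ii). From $x^\nu\neq x^*$ it does \emph{not} follow that some arc of some route of $x^\nu$ is unused in $x^*$. Cut \eqref{eq:optcuts1} involves only arcs of the \emph{non-empty} routes of $x^\nu$, and it has no term for arcs that $x$ uses but $x^\nu$ does not. Empty routes are left out of $\mathcal{R}(x^\nu)$; this is the very point you exploit about $(1,n)$ in your upper-bound step. So the cut cannot detect that an empty route of $x^\nu$ has become a non-empty route of $x^*$.

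For a concrete case, take $m=2$. Let $x^\nu$ consist of $r_a=(1,a,n)$ and one empty route, and let $x^*$ consist of $r_a$ and $r_b=(1,b,n)$, both feasible with $p_b>0$. At $x^*$ the cut of $x^\nu$ has $t_1=1$ and $t_2=0$, so it imposes
\[
\theta\le\mathcal{Q}(x^\nu)=\mathcal{Q}(x^*)-\delta_{r_b}p_b<\mathcal{Q}(x^*).
\]
Suppose the master contains the cut of some $x^\nu$ with $\mathcal{R}(x^\nu)\subseteq\mathcal{R}(x^*)$ and $\mathcal{R}(x^\nu)\neq\mathcal{R}(x^*)$. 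Then every master point with first-stage part $x^*$ has $\theta<\mathcal{Q}(x^*)$, and your lower bound $\theta^*\ge\mathcal{Q}(x^*)$ is false. This is the only failure mode. A path from $1$ to $n$ all of whose arcs are used by a feasible integer $x$ is a route of $x$, so the containment cannot happen inside a single route.

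This cannot be patched by a sharper argument, because the inequality you need fails for the cut as written. The paper's proof simply asserts that only the cut of $x^*$ is active, so it has the same hole. The consequence is not cosmetic: such a cut can also undervalue the true optimum, so the master is no longer a relaxation.

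To repair it, modify \eqref{eq:optcuts1} so that $t_2\ge1$ for every feasible integer $x\neq x^\nu$. Any of the following works:
\begin{enumerate}[label=(\alph*)]
\item Use the Laporte--Louveaux form: subtract $\sum x_{ij}$ over the arcs $(i,j)\neq(1,n)$ with $x^\nu_{ij}=0$ from $t_1$, and add it to $t_2$. This suffices because $x_{1n}=m-\sum_{k\in\hat N}x_{1k}$ is determined by the other arcs.
\item More economically, subtract $\sum_{k\in\hat N}x_{1k}-|\mathcal{R}(x^\nu)|$, the change in the number of non-empty routes, from $t_1$ and add it to $t_2$. One checks that the resulting $t_2$ is at least $1$ for every feasible integer $x\neq x^\nu$.
\item Keep the cut and restrict the lemma to the case where every $x^\nu$ that generates a cut has $|\mathcal{R}(x^\nu)|=m$. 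Your step~(ii) is then correct as written.
\end{enumerate}
With any of these changes, the rest of your proof goes through unchanged.
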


\begin{proof}
 Since the objective maximises $\theta$, only a specific optimality cut~\eqref{eq:optcuts1} associated with $x^*$ will be active in such an optimal solution. Consequently, we have that $\theta^* = \mathcal{Q}(x^*)$.
\end{proof}






\subsection{Route-based optimality cuts}
\label{ssec:route-based}

In the route-based optimality cuts, $\mathcal{Q}(x)$ is replaced by the sum $\sum_{k \in \hat{N}} \omega_k$, where each variable $\omega_k$ represents the recourse of a route that visits customer $k$ as first customer in a solution ($x_{1k}=1$). Since each cut is defined for a specific route, it has the advantage of being active for all solutions containing that route.
%
The following optimality cut for a route $r$ is formulated as an adaptation of the optimality cuts introduced by~\citet{Seguin1994}:

\begin{equation}
  \label{eq:optcuts2}
  \omega_{\pi(r)} \leq \mathcal{Q}(r)\overbrace{\left(\left [\sum_{(i,j) \in A(r)} x_{ij} - |A(r)| \right ] + 1 \right)}^{t_1} + \text{P}^1_{\text{UB}} \overbrace{\left( |A(r)| - \sum_{(i,j) \in A(r)} x_{ij} \right)}^{t_2},
\end{equation}
where $\text{P}^1_{\text{UB}}$ is an upper bound on the maximum expected profit achievable by a route described in Section~\ref{ssec:bounds} and $\pi(r)$ is the first customer visited by route $r$. To ensure the formulation remains bounded, each variable $\omega_k$ is restricted by the following inequality:
\begin{equation}
\omega_k \leq \text{P}^1_{\text{UB}} \cdot x_{1k}. \label{single_customer_bound1}
\end{equation}

We say that optimality cut~\eqref{eq:optcuts2} for route $r$ is active in solution $(x^{\nu}, \omega^{\nu})$ if and only if $t_1 = 1$ and $t_2 = 0$. In this case, it follows that $\omega_{\pi(r)} \leq \mathcal{Q}(r)$.
Conversely, it does not bound $\omega_{\pi(r)}$ (see the proof in the supplementary material).
%
Finally, we state the following lemma.
%

\begin{lemma}
  The value of $\sum_{k \in \hat{N}} \omega_k^*$ is equal to $\mathcal{Q}(x^*)$ in any optimal solution $(x^*, \omega^*)$.
\end{lemma}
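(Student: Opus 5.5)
The plan mirrors the argument of Lemma~\ref{lem:lemma2}, but carried out route by route: show that for every customer $k$, $\omega_k^* = \mathcal{Q}(r)$ when $x^*_{1k}=1$ and $r$ is the route of $x^*$ starting at $k$, and $\omega_k^*=0$ when $x^*_{1k}=0$. Summing over $k \in \hat{N}$ then gives $\sum_k \omega_k^* = \sum_{r \in \mathcal{R}(x^*)} \mathcal{Q}(r) = \mathcal{Q}(x^*)$. This uses that each non-empty route of $x^*$ has a distinct first customer (by~\eqref{eq:2STOPconnectivity}, each customer has at most one incoming arc), and that empty routes (the arc $(1,n)$) contribute nothing and have no associated $\omega$ variable.

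\textbf{Case $x^*_{1k}=0$.} Inequality~\eqref{single_customer_bound1} gives $\omega_k^* \le 0$. Nothing explicit forces $\omega_k \ge 0$, but since the objective maximises $\sum_k \omega_k$ and setting $\omega_k=0$ violates no constraint, an optimal solution has $\omega_k^*=0$. If negative values are admissible, I will make this precise by noting that raising $\omega_k$ to $0$ keeps feasibility: cuts~\eqref{eq:optcuts2} with $\pi(r)=k$ have right-hand side at least $0$, since $t_1+t_2 \ge 1$ and $t_2\ge0$ with $\text{P}^1_{\text{UB}} \ge \mathcal{Q}(r)$. Alternatively, I will assume $\omega_k \ge 0$ as part of the domain.

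\textbf{Case $x^*_{1k}=1$.} Let $r$ be the route of $x^*$ with $\pi(r)=k$. In the ILM, at convergence the cut~\eqref{eq:optcuts2} for $r$ is present; otherwise the subproblem would generate it, which is exactly how the algorithm certifies optimality. Since $x^*$ contains every arc of $r$, we have $\sum_{(i,j)\in A(r)} x^*_{ij} = |A(r)|$, so $t_1=1$ and $t_2=0$, and the cut reads $\omega_k \le \mathcal{Q}(r)$. Any other cut for a route $r' \ne r$ with $\pi(r')=k$ is not active at $x^*$, since $x^*$ misses at least one arc of $r'$. For such an inactive cut I will invoke the supplementary claim that it does not bound $\omega_{\pi(r')}$ below $\text{P}^1_{\text{UB}}$. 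Its right-hand side is $\mathcal{Q}(r')(1-s)+\text{P}^1_{\text{UB}}s$ with $s\ge1$ missing arcs, which is at least $\text{P}^1_{\text{UB}}$ because $\text{P}^1_{\text{UB}} \ge \mathcal{Q}(r')$. Inequality~\eqref{single_customer_bound1} likewise gives only $\omega_k \le \text{P}^1_{\text{UB}} \ge \mathcal{Q}(r)$. Hence the tightest bound on $\omega_k$ is exactly $\mathcal{Q}(r)$, and by maximality $\omega_k^*=\mathcal{Q}(r)$.

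\textbf{Main obstacle.} The delicate step is the inactive-cut claim: the right-hand side must stay at least $\mathcal{Q}(r)$ for every other route through $k$, including routes sharing many arcs with $r$. This requires $\text{P}^1_{\text{UB}}$ to dominate the recourse of every single route, and it requires the combination $\mathcal{Q}(r')(1-s) + \text{P}^1_{\text{UB}} s$ to be nondecreasing in $s$. I will check both properties against the definition in Section~\ref{ssec:bounds}. I will also check that the variables $\omega_k$ are decoupled across customers (each cut involves a single $\omega$), so the per-customer maximisation argument applies simultaneously to all $k$.
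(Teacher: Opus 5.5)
Your proposal is correct and follows essentially the same argument as the paper. Inequality~\eqref{single_customer_bound1} forces $\omega_k^*=0$ whenever $x^*_{1k}=0$; maximisation makes the unique active cut~\eqref{eq:optcuts2} for the route $r$ of $x^*$ starting at $k$ binding, giving $\omega^*_{\pi(r)}=\mathcal{Q}(r)$; and summing over $\mathcal{R}(x^*)$ yields $\mathcal{Q}(x^*)$. Your extra checks only make explicit what the paper leaves implicit or defers to its supplementary material: the sign of $\omega_k$, distinct first customers, and that an inactive cut has right-hand side $\mathcal{Q}(r')+s(\text{P}^1_{\text{UB}}-\mathcal{Q}(r'))\ge \text{P}^1_{\text{UB}}$ for $s\ge 1$ missing arcs.
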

\begin{proof}
  From inequalities~\eqref{single_customer_bound1}, it follows that $\omega^*_k = 0$ if customer $k$ is not the first customer visited by a route in $\mathcal{R}(x^*)$. The remaining variables may take a non-zero value.
  Since the objective maximises $\sum_{k \in \hat{N}} \omega_k$, only a specific optimality cut~\eqref{eq:optcuts2} associated with route $r \in \mathcal{R}(x^*)$ will be active, i.e. $\omega_{\pi(r)}^* = \mathcal{Q}(r)$. Summing over all $r \in \mathcal{R}(x^*)$, we have:
  \begin{equation*}
  \sum_{r \in \mathcal{R}(x^*)} \omega^*_{\pi(r)} = \sum_{r \in \mathcal{R}(x^*)} \mathcal{Q}(r) = \mathcal{Q}(x^*).
  \end{equation*}
\end{proof}

\subsection{Customer-based optimality cuts}
\label{ssec:customer-based}


The customer-based optimality cuts works similarly as the route-based ones, but they are defined for paths of customers, and therefore can exclude the arcs associated with the source and the destination.
Similarly to the previous optimality cuts presented, $\mathcal{Q}(x)$ is replaced by a sum $\sum_{k \in \hat{N}} \lambda_k$ of auxiliary variables, where $\lambda_k$ represents, in some way, the contribution to the recourse of customer $k \in \hat N$.
%
%
%
The following optimality cut for a customers path $w$ is formulated as an adaptation of the P-cuts introduced by~\citet{Lucas2024}:
\begin{equation}
  \label{eq:optcuts3}
  \sum_{k \in N(w)} \lambda_k \leq \mathcal{Q}(r_w) \overbrace{\Biggl(\left [\sum_{(i,j) \in A(w)} x_{ij} - |A(w)|\right ] + 1 \Biggl)}^{t_1} + \, \text{P}_{N(w)} \overbrace{\Biggl( |A(w)| - \sum_{(i,j) \in A(w)} x_{ij} \Biggl)}^{t_2},
\end{equation}
where $r_w$ is the route that performs path $w$, and let $\text{P}_{N(w)}$ be an upper bound on the recourse for serving the customers $N(w)$, which is described in Section~\ref{bounds:set-upper-bound}. Finally, we add the following constraint for each customer $k \in \hat N$ to have a bounded formulation:
\begin{equation}
\lambda_k \leq y_{k} \cdot p_k \cdot \mathbb{P}(\xi_{1k} + \xi_{kn} \leq T_{\max}). \label{single_customer_bound2}
\end{equation}

The right-hand side of inequality~\eqref{single_customer_bound2} provides an upper bound on $\lambda_k$ and represents the expected profit of a route serving only customer $k$. We say that inequality~\eqref{single_customer_bound2} is active whenever $y_k = 1$.

We say that optimality cut~\eqref{eq:optcuts3} for path $w$ is active in solution $(x^\nu, \lambda^\nu)$ if and only if $t_1 = 1$ and $t_2 = 0$. In this case, it follows that $\sum_{k \in N(w)} \lambda_k \leq \mathcal{Q}(r_w)$.
Conversely, it does not bound $\sum_{k \in N(w)} \lambda_k$ (see the proof in the supplementary material).
Since each cut is defined for a specific path, it has the advantage of being active for all solutions containing such a path, which can bound a broader range of solutions than the route-based optimality cuts.
%

\cite{Legault2025} have shown that a DILM is valid if the recourse function satisfies the superadditivity property. Let $\mathcal{R}$ be the set of feasible routes for the problem. Also, let $(q,h)$ denote the customers path obtained by concatenating paths $q$ and $h$ and let $r=r_{(q,h)}$, $r_q$ and $r_h$ be the routes that perform the corresponding customers paths. The superadditivity property for the STOP is stated as follows.

\begin{property}
  \label{prop_super_add}
  The recourse function is superadditive if $\mathcal{Q}(r) \leq \mathcal{Q}(r_q) + \mathcal{Q}(r_h)$.
\end{property}


\begin{lemma}
  \label{lem:superadditivity}
  The superadditivity property holds for the recourse function $\mathcal{Q}(r)$ evaluated on route $r$.
\end{lemma}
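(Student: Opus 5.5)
We need to show $\mathcal{Q}(r) \le \mathcal{Q}(r_q)+\mathcal{Q}(r_h)$, where $r=r_{(q,h)}$. The plan is to split the profit of $r$ according to the two subpaths and compare reliabilities term by term. Since $\hat N(r)=\hat N(q)\,\dot\cup\,\hat N(h)$, we can write
\begin{equation*}
\mathcal{Q}(r)=\delta_r\sum_{k\in\hat N(q)}p_k+\delta_r\sum_{k\in\hat N(h)}p_k .
\end{equation*}
With $p_k\ge 0$, it is enough to prove $\delta_r\le\delta_{r_q}$ and $\delta_r\le\delta_{r_h}$. These give $\delta_r\sum_{\hat N(q)}p_k\le\mathcal{Q}(r_q)$ and likewise for $h$, and summing the two yields the claim.

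The key step is the monotonicity of reliability under taking subpaths. Write $r=(1,q,h,n)$ and $r_q=(1,q,n)$. The route $r_q$ is obtained from $r$ by replacing the subpath from the last node $a$ of $q$ through $h$ to $n$ with the single arc $(a,n)$.

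Using the triangle inequality on the expectations $\mu_{ij}$, iterated along the replaced subpath, gives $\mu_{r_q}\le\mu_r$. The assumed triangle inequality on the variances $\sigma^2_{ij}$ gives $\sigma^2_{r_q}\le\sigma^2_r$ in the same way. The case of $r_h=(1,h,n)$ is symmetric: the arcs $(1,\text{first of }q),\dots,(\text{last of }q,\text{first of }h)$ are replaced by the single arc $(1,\text{first of }h)$.

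It remains to convert these moment inequalities into reliability inequalities. Here $\delta_r=\Phi\bigl((T_{\max}-\mu_r)/\sigma_r\bigr)$, where $\Phi$ is increasing. A smaller mean always helps. A smaller standard deviation helps only when the numerator is nonnegative, that is, when $\mu_r\le T_{\max}$.

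This sign condition is the main obstacle. If $\mu_r>T_{\max}$, shrinking the variance could actually lower the probability. I would handle it by Lemma~\ref{lem:lemma1}: since $r$ is a route of a feasible first-stage solution (indeed $r\in\mathcal{R}$), constraints \eqref{eq:2STOPflow}–\eqref{eq:2STOPUB} give $\mu_r\le T_{\max}$, and hence $\mu_{r_q}\le\mu_r\le T_{\max}$. The argument then proceeds in two steps:
\begin{equation*}
\frac{T_{\max}-\mu_r}{\sigma_r}\;\le\;\frac{T_{\max}-\mu_{r_q}}{\sigma_r}\;\le\;\frac{T_{\max}-\mu_{r_q}}{\sigma_{r_q}} .
\end{equation*}
The first inequality uses $\mu_{r_q}\le\mu_r$. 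The second uses a nonnegative numerator together with $\sigma_{r_q}\le\sigma_r$.

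Two degenerate cases remain. If $\sigma_{r_q}=0$, the reliability is either $1$ or an indicator, and the conclusion is trivial. If $r$ lies outside the feasible set, note that it has $\delta_r<\alpha$ but might still have $\mu_r\le T_{\max}$; in that case I would restrict the statement to $r\in\mathcal{R}$, as in the property's definition. Applying $\Phi$ to the chain above gives $\delta_r\le\delta_{r_q}$, and the same argument gives $\delta_r\le\delta_{r_h}$, which completes the proof.
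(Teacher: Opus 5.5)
Your proof is correct and follows the paper's route. You split $\mathcal{Q}(r)$ over $\hat N(q)$ and $\hat N(h)$, use the triangle inequalities on $\mu_{ij}$ and $\sigma^2_{ij}$ to get $\mu_{r_q},\mu_{r_h}\le\mu_r$ and $\sigma^2_{r_q},\sigma^2_{r_h}\le\sigma^2_r$, deduce $\delta_r\le\delta_{r_q}$ and $\delta_r\le\delta_{r_h}$, and then multiply by the nonnegative profit sums and add. Your check that $\mu_{r_q}\le\mu_r\le T_{\max}$ must hold before a smaller variance can increase $\Phi\bigl((T_{\max}-\mu)/\sigma\bigr)$ fills a step the paper passes over with a bare ``Thus''. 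Only $\mu_{r_q},\mu_{r_h}\le T_{\max}$ is actually needed, so your closing remark about infeasible $r$ with $\mu_r\le T_{\max}$ is unnecessary; the argument can only break when $\mu_r>T_{\max}$.
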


\begin{proof}
  Let $q=(i_1,...,i_t), h=(i_{t+1},...,i_l)$ be two customers paths with $r = r_{(q, h)} \in \mathcal{R}$. Writing the superadditivity condition for route $r$ gives:
  \begin{align}
  \delta_r \sum_{k \in N(r)}p_k & \leq \delta_{r_q} \sum_{k \in N(q)}p_k + \delta_{r_h} \sum_{k \in N(h)}p_k.
  \label{eq:superadditivity}
  \end{align}
  Given that $\mu_{ij}$ and $\sigma^2_{ij}$ respect the triangular inequality, we have that:
  \begin{equation*}
  \mu_{1i_1} + \sum_{k=1}^{t-1} \mu_{i_k i_{k+1}} + \mu_{i_t n} \leq \mu_{1i_1} + \sum_{k=1}^{l-1} \mu_{i_k i_{k+1}} + \mu_{i_l n}, \quad \mu_{1i_{t+1}} + \sum_{k=t+1}^{l-1} \mu_{i_k i_{k+1}} + \mu_{i_l n} \leq \mu_{1i_1} + \sum_{k=1}^{l-1} \mu_{i_k i_{k+1}} + \mu_{i_l n},
  \end{equation*}
  and
  \begin{equation*}
  \sigma^2_{1i_1} + \sum_{k=1}^{t-1} \sigma^2_{i_k i_{k+1}} + \mu_{i_t n} \leq \sigma^2_{1i_1} + \sum_{k=1}^{l-1} \sigma^2_{i_k i_{k+1}} + \sigma^2_{i_l n}, \quad \sigma^2_{1i_{t+1}} + \sum_{k=t+1}^{l-1} \sigma^2_{i_k i_{k+1}} + \sigma^2_{i_l n} \leq \sigma^2_{1i_1} + \sum_{k=1}^{l-1} \mu_{i_k i_{k+1}} + \sigma^2_{i_l n}.
  \end{equation*}
  Thus:
  \[
  \delta_r \leq \delta_{r_q} \quad \text{and} \quad \delta_r \leq \delta_{r_h},
  \]
  We can multiply each inequality by the sum of profits of each path, giving:
  \[
  \delta_r \sum_{k \in N(q)} p_k \leq \delta_{r_q} \sum_{k \in N(q)} p_k \quad \text{and} \quad \delta_r \sum_{k \in N(h)} p_k \leq \delta_{r_h} \sum_{k \in N(h)} p_k.
  \]
  Summing both inequalities gives~\eqref{eq:superadditivity}.
\end{proof}

\noindent Finally, we state the following lemma.

\begin{lemma}
 The value of $\sum_{k \in \hat{N}} \lambda_k$ is equal to $\mathcal{Q}(x^*)$ in any optimal solution $(x^*, \lambda^*)$.
 \label{lem:optcuts3}
\end{lemma}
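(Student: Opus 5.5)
We prove two inequalities: $\sum_{k\in\hat N}\lambda^*_k \le \mathcal{Q}(x^*)$, and the existence of a feasible $\lambda$ attaining $\mathcal{Q}(x^*)$. Since the objective maximises $\sum_k \lambda_k$, together these give the claimed equality.

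\emph{Upper bound.} For each route $r\in\mathcal{R}(x^*)$, let $w_r$ be its full customers path, so that $r_{w_r}=r$. Every arc of $A(w_r)$ has $x^*_{ij}=1$. Hence in cut~\eqref{eq:optcuts3} for $w_r$ we have $t_1=1$ and $t_2=0$, and the cut is active: $\sum_{k\in N(w_r)}\lambda^*_k\le\mathcal{Q}(r)$. Customers with $y^*_k=0$ satisfy $\lambda^*_k\le 0$ by~\eqref{single_customer_bound2}. The node sets $N(w_r)$ partition the visited customers. Summing over $r\in\mathcal{R}(x^*)$ therefore gives $\sum_k\lambda^*_k\le\sum_r\mathcal{Q}(r)=\mathcal{Q}(x^*)$.

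\emph{Attainment.} This is the main obstacle: we must show that the cuts for paths only partially contained in $x^*$, and the cuts for sub-paths, do not push $\sum_k \lambda_k$ below $\mathcal{Q}(x^*)$.

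We construct $\lambda$ route by route. For route $r$ with customer sequence $(i_1,\dots,i_l)$, set $\lambda_k=0$ for unvisited customers, and distribute $\mathcal{Q}(r)$ as $\lambda_{i_s}=\delta_r p_{i_s}$. These values sum to $\mathcal{Q}(r)$.

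\emph{Checking the cuts.} For any path $w$ we need $\sum_{k\in N(w)}\lambda_k\le \mathcal{Q}(r_w)t_1+\text{P}_{N(w)}t_2$.

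If $t_2\ge1$, the right-hand side is at least $\text{P}_{N(w)}$ whenever $t_1\ge 0$, and at least $\text{P}_{N(w)}-\mathcal{Q}(r_w)(t_2-1)$ in general. This is the same argument as the supplementary claim that inactive cuts do not bind. We invoke that result: a non-active cut is implied by the bounds~\eqref{single_customer_bound2} together with the validity of $\text{P}_{N(w)}$. Our $\lambda_k\le \delta_r p_k\le p_k\mathbb{P}(\xi_{1k}+\xi_{kn}\le T_{\max})$ satisfies~\eqref{single_customer_bound2}, because by the triangle inequality on means and variances the single-customer route has mean and variance no larger than those of $r$. The same fact also certifies~\eqref{single_customer_bound2} directly.

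If $t_1=1$ and $t_2=0$, then $w$ is a contiguous sub-path of some route $r\in\mathcal{R}(x^*)$. Here we use the argument of Lemma~\ref{lem:superadditivity}: the triangle inequality gives $\delta_r\le\delta_{r_w}$, so
\[
\sum_{k\in N(w)}\lambda_k=\delta_r\sum_{k\in N(w)}p_k\le\delta_{r_w}\sum_{k\in N(w)}p_k=\mathcal{Q}(r_w).
\]
This is exactly where superadditivity (Property~\ref{prop_super_add}) is essential: it guarantees that the proportional split of $\mathcal{Q}(r)$ never violates the cut of any sub-path.

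Hence this $\lambda$ is feasible with objective $\mathcal{Q}(x^*)$. Optimality then gives $\sum_k\lambda^*_k\ge\mathcal{Q}(x^*)$, and combined with the upper bound this yields equality.
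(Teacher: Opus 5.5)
Your proof is correct. The upper-bound half (the cut for the full customers path $w_r$ is active, and $\lambda^*_k\le 0$ for unvisited customers) coincides with what the paper uses. The attainment half, however, takes a genuinely different route.

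The paper argues from the constraint side. For each $r\in\mathcal{R}(x^*)$ it looks at the active cuts on sub-paths of $r$ and notes that aggregating them along any partition $P\in\Omega_r$ of $\hat N(r)$ only gives the bound $\sum_{h\in P}\mathcal{Q}(r_h)\ge\mathcal{Q}(r)$, by generalised superadditivity. It then concludes that maximisation drives $\sum_{k\in\hat N(r)}\lambda^*_k$ up to $\mathcal{Q}(r)$.

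You instead exhibit a primal certificate, $\lambda_k=\delta_r p_k$, and check every cut directly. This is the more airtight version. The paper's step tacitly assumes that sub-path cuts can only be combined through exact partitions, whereas LP duality also allows overlapping or fractional combinations. Ruling these out needs an extra argument, e.g.\ total unimodularity of interval matrices; your explicit feasible point makes the question disappear.

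The price is that you use more than Property~\ref{prop_super_add}. The proportional split needs the pointwise monotonicity $\delta_r\le\delta_{r_w}$ for every sub-path $w$. That is the ingredient from which the paper derives superadditivity, not superadditivity itself, so calling this step ``exactly'' superadditivity is a mislabel. For a recourse that is merely superadditive (as in \citet{Legault2025}) there is no such canonical split, and the paper's partition-type argument, once completed, is the one that transfers.

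Two small repairs are needed.
\begin{enumerate}
\item ``Smaller mean and smaller variance'' implies larger reliability only when $\mu_r\le T_{\max}$; for $\mu_r>T_{\max}$, shrinking the variance lowers $\delta$. This condition holds for every $r\in\mathcal{R}(x^*)$ by Lemma~\ref{lem:lemma1}, and you should say so. It is needed both for $\delta_r\le\delta_{r_w}$ and for your check of \eqref{single_customer_bound2}.
\item In the non-active case, your intermediate bound $\text{P}_{N(w)}-\mathcal{Q}(r_w)(t_2-1)$ is too weak on its own. Since $t_1=1-t_2$, the right-hand side equals $\text{P}_{N(w)}+(t_2-1)\bigl(\text{P}_{N(w)}-\mathcal{Q}(r_w)\bigr)$. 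This is at least $\text{P}_{N(w)}$ by Lemma~\ref{lem:lemma7}, which in turn is at least $\sum_{k\in N(w)}\lambda_k$ by \eqref{single_customer_bound2}. That closes the case without appealing to the supplementary material.
\end{enumerate}
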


\begin{proof}
  From inequalities~\eqref{single_customer_bound2}, it follows that $\lambda^*_k = 0$ if customer $k$ is not visited by any route in $\mathcal{R}(x^*)$. The remaining variables may take a non-zero value. For each route $r \in \mathcal{R}(x^*)$, we denote with $\mathcal{P}_r$ the collection of customers sub-paths of $r$ associated with active inequalities~\eqref{eq:optcuts3} and~\eqref{single_customer_bound2}.
  From each set $\mathcal{P}_r$, we construct the corresponding set $\Omega_r$ as follows:
  \[
  \Omega_r = \left\{ P \subseteq \mathcal{P}_r : \bigcup_{\substack{h \in P}} N(h) = \hat N(r),\ \bigcap_{\substack{h \in P}} N(h) = \emptyset \right\},
  \]
  which represents the set of partitions obtained by aggregating paths of $\mathcal{P}_r$.
  The superadditivity property stated for an arbitrary route $r$ can be easily generalised to any collection of customers paths $P$ such that sets $N(h)$, for each $h \in P$, form a partition of $\hat N(r)$.
  Since the recourse function $\mathcal{Q}(r)$ satisfies the superadditivity Property~\ref{prop_super_add} for route $r$, it holds that $\mathcal{Q}(r) \leq \sum_{h \in P} \mathcal Q(r_h)$ for each $P \in \Omega_r$.
  %
  Consequently, $\sum_{k \in \hat{N}(r)} \lambda_k^* = \mathcal{Q}(r)$ for each $r \in \mathcal{R}(x^*)$.
  Summing over all routes:
  \begin{align*}
  \sum_{k \in N} \lambda_k^* =
  \sum_{r \in \mathcal{R}(x^*)}
  \sum_{k \in \hat N(r)} \lambda_k^* =
  \sum_{r \in \mathcal{R}(x^*)}
  \mathcal{Q}(r) =
  \mathcal{Q}(x^*).
  \end{align*}
\end{proof}

\subsection{Upper-Bounding Functionals}
\label{ssec:ubf}

This section introduces the Upper-Bounding Functionals (UBFs), which are an adaptation of the Lower-Bounding Functionals proposed by~\cite{Lucas2024}, originally developed for minimisation problems. The purpose of such inequalities is to tighten the value of the objective function by computing a specific upper bound on the recourse function of a generic route that visits a given set of customers.
Given a set of customers $S$, an UBF is formulated as follows:

\begin{equation}
	\label{eq:ubf}
	\sum_{k \in S} \lambda_k \leq \mathcal{Q}_{\text{UB}}(S) \overbrace{\left(\left [ \sum_{(i,j) \in S \times S} x_{ij} - |S| \right ] + 2 \right)}^{t_1} + \, \text{P}_{\text{S}} \overbrace{\left(|S| - 1 - \sum_{(i,j) \in S \times S} x_{ij}\right)}^{t_2}.
\end{equation}

In the above inequality, $\text{P}_{\text{S}}$ is the Set-based upper bound on $S$ introduced in Section~\ref{bounds:set-upper-bound} and $\mathcal{Q}_{\text{UB}}(S)$ is an upper bound on the recourse of any arbitrary route that visits all customers in $S$.

We say that UBF~\eqref{eq:ubf} for set $S$ is active in solution $(x^{\nu}, \lambda^{\nu})$ if and only if $t_1 = 1$ and $t_2 = 0$. In this case, it follows that $\sum_{k \in S} \lambda_k \leq \mathcal{Q}_{\text{UB}}(S)$.
Conversely, it does not bound $\sum_{k \in S} \lambda_k$ (see the proof in the supplementary material). They are defined only for the customer-based optimality cuts and differ from the corresponding cuts since they can be active for multiple solutions sharing the same set of customers.

Since determining the route with the maximum route reliability value that visits all the customers in $S$ is equivalent of solving a Travelling Salesman Problem (TSP) with stochastic travel times, it follows that finding such a route is NP-Hard. For efficiency reasons, we compute an upper bound $\delta_{\text{UB}}$ for the route reliability.

To compute $\delta_{\text{UB}}$, we define two directed subgraphs $G_{\mu}$ and $G_{\sigma^2}$ over set $S$.
In $G_{\mu}$, the arc weights correspond to the arc expectations $\mu_{ij}$, while in $G_{\sigma^2}$, they represent the arc variances $\sigma^2_{ij}$. Next, we compute a lower bound on the route expectation and a lower bound on the route variance for any arbitrary route that visits all customers in $S$. The first bound, $H^{\mu}_{\mathrm{LB}}$, is computed on $G_{\mu}$, while the second, $H^{\sigma^2}_{\mathrm{LB}}$, is computed on $G_{\sigma^2}$. Both lower bounds are obtained by applying the Helsgaun bound~\citep{Helsgaun2000} on its corresponding subgraph.
The Helsgaun bound, as well as the Held-Karp bound~\citep{Held1970}, uses the Lagrangian relaxation of the classical TSP formulation to compute a lower bound of the TSP through the 1-tree structure which consists of a minimum spanning tree with one additional edge. We employed the subgradient method to solve the Lagrangian dual problem by using a simple decreasing step size and modifying the Lagrangian multipliers in accordance with the approach outlined by~\cite{Volgenant1982}.

Let us define a Normal random variable $\xi_{\text{LB}} \sim \mathcal{N}(H^{\mu}_{\text{LB}}, H^{\sigma^2}_{\text{LB}})$ which provides a lower estimation on the route uncertain travel time. By evaluating its cumulative distribution function on $T_{\max}$, we can compute an upper bound $\delta_{\text{UB}}$ on the reliability of any arbitrary route that visits all customers in $S$:
\[\delta_{\text{UB}} = \mathbb{P}(\xi_{\text{LB}} \leq T_{\max}).\]
Finally, $\mathcal{Q}_{\text{UB}}(S)$ can be calculated by:
\[\mathcal{Q}_{\text{UB}}(S) = \delta_{\text{UB}} \cdot \sum_{k \in S} p_k.\]
Therefore, $\mathcal{Q}_{\text{UB}}(S)$ represents an upper bound on the recourse value of any arbitrary route that visits all customers in $S$.

\subsection{Upper bounds}
\label{ssec:bounds}

This section presents a set of upper bounds divided into: (i) \emph{customers upper bounds} and (ii) \emph{profit upper bounds}. Each of them is further subdivided in: (i) \emph{single-route} and (ii) \emph{multi-route}. Let $\text{C}^1_{\text{UB}}$ and $\text{P}^1_{\text{UB}}$ denote the upper bounds on the number of customers and on the expected profit for a single route, respectively. Let $\text{C}^m_{\text{UB}}$ and $\text{P}^m_{\text{UB}}$ be the same bounds considering $m$ routes instead of only one.

All such bounds are calculated by solving the Continuous Stochastic Team Orienteering Problem (CSTOP) that consists in the continuous relaxation of the STOP. In the CSTOP, we relax the chance constraints~\eqref{eq:2chanceconstraints} and dynamically separate the feasibility cuts presented in Section~\ref{algorithm2} to ensure feasibility with respect to them. The CSTOP shares the same objective of the STOP, and the convergence to an optimal fractional solution is guaranteed by the separation of the optimality cuts.
%
%
Essentially, we consider two different variants of the CSTOP. When computing $\text{P}^1_{\text{UB}}$ and $\text{P}^m_{\text{UB}}$, we retain the original objective function. Conversely, when computing $\text{C}^1_{\text{UB}}$ and $\text{C}^m_{\text{UB}}$, the objective function is modified to maximise the number of served customers; in this case, we do not separate the optimality cuts. We update these bounds iteratively by solving the two variants of the CSTOP using the current bounds as input parameters. The process iterates until convergence.

\subsubsection{Set-based upper bound.}
\label{bounds:set-upper-bound}
We also introduce $P_S$ for the set of customers $S$ as an upper bound on the recourse of any arbitrary route that visits all customers in $S$:
\begin{equation}
\label{eq:set-ub}
P_S = \sum_{k \in S} \mathbb{P}(\xi_{1k} + \xi_{kn} \leq T_{\max}) \cdot p_k.
\end{equation}
Its validity is demonstrated in Lemma~\ref{lem:lemma7}.

\begin{lemma}
\label{lem:lemma7}
$\text{P}_{\text{S}}$ is a valid upper bound on the recourse of any arbitrary route that visits all customers in $S$.
\end{lemma}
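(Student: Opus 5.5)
We need to show that for any route $r$ with $S \subseteq \hat N(r)$, the quantity we bound is at most $P_S$. Since $P_S$ is used in the customer-based cuts and UBFs as a bound on $\sum_{k\in S}\lambda_k$, the relevant quantity is the share of the recourse of $r$ attributable to $S$:
\[
\delta_r \sum_{k\in S} p_k .
\]
If $\hat N(r)=S$, this coincides with $\mathcal{Q}(r)$.

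The plan is to bound $\delta_r$ separately for each customer $k \in S$ and then sum. Fix $k\in S$ and write $r=(1,i_1,\dots,i_l,n)$ with $k=i_s$.

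First we compare expectations. Repeatedly applying the triangle inequality to $\mu$ shortcuts the prefix $1\to i_1\to\dots\to k$ to the single arc $(1,k)$, so $\mu_{1k}\le \mu_{1i_1}+\dots+\mu_{i_{s-1}k}$. Likewise the suffix shortcuts to the arc $(k,n)$. Summing the two gives $\mu_{1k}+\mu_{kn}\le \mu_r$.

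Next we compare variances. The identical argument, using the triangle inequality assumed for $\sigma^2_{ij}$, gives $\sigma^2_{1k}+\sigma^2_{kn}\le \sigma^2_r$.

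By independence and the closure of the Normal family under sums, $\xi_{1k}+\xi_{kn}\sim\mathcal{N}(\mu_{1k}+\mu_{kn},\sigma^2_{1k}+\sigma^2_{kn})$. We then need the reliability comparison
\[
\delta_r=\Phi\!\left(\frac{T_{\max}-\mu_r}{\sigma_r}\right)\le \Phi\!\left(\frac{T_{\max}-\mu_{1k}-\mu_{kn}}{\sqrt{\sigma^2_{1k}+\sigma^2_{kn}}}\right)=\mathbb{P}(\xi_{1k}+\xi_{kn}\le T_{\max}).
\]
This is the same monotonicity step used implicitly in Lemma~\ref{lem:superadditivity}, and it is the main obstacle. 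A smaller mean always increases the CDF value at $T_{\max}$. A smaller variance, however, increases it only when the numerator $T_{\max}-\mu$ is nonnegative, i.e.\ when the mean lies below the budget.

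To close this step, first take the case $\mu_r\le T_{\max}$. This holds for every route in a feasible first-stage solution by Lemma~\ref{lem:lemma1}, and also whenever the chance constraints~\eqref{eq:2chanceconstraints} hold with $\alpha\ge 0.5$. Then the numerator $T_{\max}-\mu_r$ is nonnegative, and reducing the mean only enlarges it. Reducing the standard deviation from $\sigma_r$ to $\sqrt{\sigma^2_{1k}+\sigma^2_{kn}}$ therefore enlarges the argument of $\Phi$ further, and since $\Phi$ is increasing the inequality follows.

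If $\mu_{1k}+\mu_{kn}\le T_{\max}<\mu_r$, the left-hand side is below $0.5$ and the right-hand side is at least $0.5$, so the inequality still holds. The case $\mu_{1k}+\mu_{kn}> T_{\max}$ cannot occur for a route satisfying $\mu_r\le T_{\max}$. We therefore restrict the statement to routes admissible in the first-stage problem, which is exactly where the bound is used. The degenerate case $\sigma=0$ is handled by a limiting argument.

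Finally, multiplying by $p_k\ge 0$ and summing over $k\in S$ gives
\[
\delta_r\sum_{k\in S}p_k=\sum_{k\in S}\delta_r p_k\le \sum_{k\in S}\mathbb{P}(\xi_{1k}+\xi_{kn}\le T_{\max})\,p_k=P_S .
\]
This is the claimed bound, and it equals $\mathcal{Q}(r)$ when $\hat N(r)=S$. Equivalently, the result follows from the superadditivity of Lemma~\ref{lem:superadditivity} applied to the partition of the customers path into singletons.
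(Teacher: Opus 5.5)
Your proposal is correct and is essentially the paper's proof: the paper invokes the superadditivity of Lemma~\ref{lem:superadditivity}, generalised to the partition of $\hat N(r)$ into singletons, and your per-customer comparison $\delta_r\le\mathbb{P}(\xi_{1k}+\xi_{kn}\le T_{\max})$, obtained from the triangle inequalities on $\mu$ and $\sigma^2$, is that same argument written out directly. Two of your additions make explicit what the paper leaves implicit: the restriction to routes with $\mu_r\le T_{\max}$, which is genuinely needed because the variance comparison reverses above the budget, and the use of the $S$-share $\delta_r\sum_{k\in S}p_k$; the paper covers these only tacitly, by stating superadditivity for feasible routes $r\in\mathcal{R}$ and by implicitly taking $\hat N(r)=S$.
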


\begin{proof}
Let $r$ be a route that visits all the customers in $S$ in an arbitrary order.
As discussed in Lemma~\ref{lem:optcuts3}, the superadditivity property of the recourse function~\ref{lem:superadditivity} for route $r$ can be stated for any partition $P$ of customers paths of $\hat{N}(r)$.
If $P$ is a collection of paths such that each element is a unit set containing exactly one customer of $\hat{N}(r)$, then $P_{\text{S}} \geq \mathcal{Q}(r)$, which concludes the proof.
%
\end{proof}

\subsection{Valid inequalities}
\label{ssec:vi}

\noindent This section presents three families of valid inequalities to tighten the value of the objective function.

\paragraph{Solution-based optimality cuts.} To bound the value of $\theta$, we add:

\[\theta \leq \text{P}^m_{\text{UB}}.\]

\paragraph{Route-based optimality cuts.} To bound the value of $\sum_{k \in \hat{N}} \omega_k$, we add:

\[\sum_{k \in \hat{N}} \omega_k \, \leq \sum_{k \in \hat{N}} y_{k} p_k \, \mathbb{P}(\xi_{1k} + \xi_{kn} \leq T_{\max}) \quad \text{and} \quad \sum_{k \in \hat{N}} \omega_k \leq \text{P}^m_{\text{UB}}.\]

\paragraph{Customer-based optimality cuts.} To bound the value of $\sum_{k \in \hat{N}} \lambda_k$, we add:

\[\sum_{k \in \hat{N}} \lambda_k \leq \text{P}^m_{\text{UB}}.\]
The mathematical formulation of the first-stage problem~\eqref{eq:2STOPof}--\eqref{eq:2STOPdef4} is further strengthened by adding an additional inequality to impose a threshold on the maximum number of served customers in a solution:

\[\sum_{k \in \hat{N}} y_{k} \leq \text{C}^m_{\text{UB}}.\]



\subsection{Contraints liftings}
\label{ssec:cl}

In this section, we introduce two constraints liftings to strengthen the formulation of the first-stage problem~\eqref{eq:2STOPof}--\eqref{eq:2STOPdef4}. The first, the Time-budget lifting, tightens the value of $T_{\max}$ in~\eqref{eq:2STOPUB}. The second, the Sedrakyan lifting, strengthens constraints~\eqref{eq:2STOPflow},~\eqref{eq:2STOPUB},~\eqref{eq:2STOPLB}, and~\eqref{eq:2STOPDepot}.

\subsubsection{Time budget lifting}
The idea behind this lifting is to determine the maximum route expectation value that a route can attain.
To this end, the value of $T_{\max}$ in~\eqref{eq:2STOPUB} is progressively tightened without cutting off any feasible solution. This is achieved by alternating the computation of a lower bound on the route variance $\sigma^2_{\text{LB}}$ and the computation of an upper bound on the route expectation $\mu_{\text{UB}}$.
To compute $\sigma^2_{\text{LB}}$, we solve an adaptation of the single-route CSTOP following the same approach described in Section~\ref{ssec:bounds}. In this case, the CSTOP objective function was modified to minimise $\sum_{(i,j) \in A} \sigma^2_{ij} \, x_{ij}$, with an additional constraint requiring that the route expectation must be greater than or equal to $\mu_{\text{UB}}$.
The value of $\mu_{\text{UB}}$ is then updated by taking the (${1-\alpha}$)-quantile of the Normal distribution $\mathcal{N}(T_{\max}, \sigma^2_{\text{LB}})$. In other words, $\sigma^2_{\text{LB}}$ and $\mu_{\text{UB}}$ are updated iteratively until convergence is achieved; in our case, this occurs when the difference between the value at the previous iteration and the value at the current iteration is close to zero for both parameters (initially, $\mu_{\text{UB}} = T_{\max}$).
The pseudocode of the procedure is reported in Algorithm~\ref{algo:tbl} and makes use of the following procedures:
\begin{inparaenum}[(i)]
	\item \texttt{RVLB}: calculates the lower bound on the route variance given a specified route expectation $\mu_{\text{UB}}$; and
	\item \texttt{quantile}: calculates the specified quantile for the given distribution.
\end{inparaenum}

\allowdisplaybreaks{
	\begin{algorithm}[!ht]
		\KwData{$T_{\max}$, $\alpha$}
		\KwResult{$\mu_{\text{UB}}$}
		$\sigma^2_{\text{LB}} \leftarrow +\infty$; $\mu_{\text{UB}} \leftarrow T_{\max}$\;
		\While{not \texttt{convergence} $(\mu_{\text{UB}}, \sigma^2_{\text{LB}})$}{
			$\sigma^2_{\text{LB}} \leftarrow$ \texttt{RVLB} $(\mu_{\text{UB}})$\;
			$\mu_{\text{UB}} \leftarrow$ \texttt{quantile} ($\mathcal{N}(T_{\max}$, $\sigma^2_{\text{LB}})$, 1 - $\alpha$)\;
		}

		\Return{$\mu_{\text{UB}}$;}
		\caption{\texttt{The Time budget lifting} procedure.}
		\label{algo:tbl}
	\end{algorithm}
}
The validity of $\mu_{\text{UB}}$ is demonstrated in Lemma~\ref{lem:lemma6}:
\begin{lemma}
\label{lem:lemma6}
$\mu_{\text{UB}}$ is a valid upper bound on the route expectation.
\end{lemma}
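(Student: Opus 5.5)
The plan is an induction on the iterations of Algorithm~\ref{algo:tbl}. The invariant is that, at every iteration, the current value of $\mu_{\text{UB}}$ satisfies $\mu_r \leq \mu_{\text{UB}}$ for every feasible route $r$, that is, every route with $\delta_r \geq \alpha$. The base case uses the initial value $\mu_{\text{UB}} = T_{\max}$, which is valid by the argument of Lemma~\ref{lem:lemma1}: feasibility forces $\delta_r \geq \alpha \geq 0.5$, and hence $\mu_r \leq T_{\max}$. Here I assume $\alpha \geq 0.5$, which is implicit in the construction, since otherwise the $(1-\alpha)$-quantile would exceed $T_{\max}$ and the lifting would be meaningless.

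For the inductive step, suppose $\mu_{\text{UB}}$ is valid at the start of an iteration. I will argue by contradiction. Let $r$ be a feasible route with $\mu_r > \mu'_{\text{UB}}$, where $\mu'_{\text{UB}}$ is the updated value.

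\textbf{First, bound the variance of $r$.} Since $\mu'_{\text{UB}} \le \mu_{\text{UB}}$ (to be checked), consider the candidate routes whose expectation is at least the threshold used in \texttt{RVLB}. Every feasible route with expectation at least $\mu'_{\text{UB}}$ belongs to the feasible set of the CSTOP that \texttt{RVLB} minimises over, or one with the appropriate threshold. Its integer incidence vector is a feasible point of the continuous relaxation, because the relaxation only removes integrality and the feasibility cuts are valid for feasible routes. Hence $\sigma^2_r \geq \sigma^2_{\text{LB}}$.

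The delicate point is the consistency of thresholds: \texttt{RVLB} is called with the old $\mu_{\text{UB}}$ while the contradiction concerns $\mu_r > \mu'_{\text{UB}}$. I would handle this using the fixed point at convergence. At convergence, $\sigma^2_{\text{LB}} = \texttt{RVLB}(\mu_{\text{UB}})$ and $\mu_{\text{UB}}$ is the quantile computed with that $\sigma^2_{\text{LB}}$. So it suffices to prove validity of the final pair, showing that any feasible route with $\mu_r > \mu_{\text{UB}}$ has $\sigma^2_r \ge \sigma^2_{\text{LB}}$.

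\textbf{Second, contradict feasibility.} The reliability is $\delta_r = \Phi((T_{\max}-\mu_r)/\sigma_r)$. Feasibility, $\delta_r \ge \alpha$, is equivalent to $\mu_r \le T_{\max} - z_\alpha \sigma_r$ with $z_\alpha = \Phi^{-1}(\alpha) \ge 0$. Because $z_\alpha \ge 0$, the right-hand side is nonincreasing in $\sigma_r$. Combining this with $\sigma_r \ge \sigma_{\text{LB}}$ gives $\mu_r \le T_{\max} - z_\alpha\sigma_{\text{LB}}$. This last quantity is exactly the $(1-\alpha)$-quantile of $\mathcal{N}(T_{\max},\sigma^2_{\text{LB}})$, namely $\mu_{\text{UB}}$, which contradicts $\mu_r > \mu_{\text{UB}}$.

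\textbf{Monotonicity.} The iteration is monotone: a smaller $\mu_{\text{UB}}$ gives a looser constraint in \texttt{RVLB}, hence a smaller $\sigma^2_{\text{LB}}$, hence a larger quantile. So I also want the sequence to stay above the true maximum expectation. I would formalise this by showing $\mu^{(t)}_{\text{UB}} \ge \mu^\star := \max\{\mu_r : \delta_r\ge\alpha\}$ for all $t$. Suppose $\mu^{(t)}_{\text{UB}} \ge \mu^\star$. Let $r^\star$ attain $\mu^\star$, and consider routes with $\mu_r \ge \mu^{(t)}_{\text{UB}}$; the previous paragraph applied to this set yields the bound. I expect the main obstacle to be exactly this bookkeeping: making the threshold passed to \texttt{RVLB} and the route being excluded line up. The fallback is to phrase validity as "any route with $\mu_r \ge \mu_{\text{UB}}^{\text{old}}$ is infeasible unless $\mu_r \le \mu_{\text{UB}}^{\text{new}}$", and to conclude by monotone convergence of the iterates.
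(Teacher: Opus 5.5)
Your fixed-point argument is the part that actually proves the lemma, and it is correct. At an exact fixed point, the threshold passed to \texttt{RVLB} is the returned $\mu_{\text{UB}}$ itself. So any feasible route with $\mu_r > \mu_{\text{UB}}$ has an incidence vector that is feasible for that relaxation, hence $\sigma_r^2 \ge \sigma^2_{\text{LB}}$, and then $\mu_r \le T_{\max} - z_\alpha\sigma_r \le T_{\max} - z_\alpha\sigma_{\text{LB}} = \mu_{\text{UB}}$. This is more complete than the paper's proof. The paper only does the symmetry/quantile computation for a route whose variance is \emph{exactly} $\sigma^2_{\text{LB}}$. It leaves two things implicit, and you supply both:
\begin{itemize}
\item that a larger variance cannot restore feasibility, which is your observation that $T_{\max} - z_\alpha\sigma_r$ is nonincreasing in $\sigma_r$ when $\alpha \ge 0.5$;
\item why $\sigma^2_{\text{LB}}$ lower-bounds the variance of the routes being excluded, which is your threshold-consistency point.
\end{itemize}

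The scaffolding around that argument is wrong and should be dropped. Write $\mu^{(t)}$ for the $t$-th iterate and $g(\mu) = T_{\max} - z_\alpha\sqrt{\texttt{RVLB}(\mu)}$.
\begin{itemize}
\item You correctly note that $g$ is nonincreasing. But that makes the iteration alternate rather than monotone, so ``$\mu'_{\text{UB}} \le \mu_{\text{UB}}$'' fails at every other step.
\item Validity does not propagate forward either. The only sound per-step fact is your fallback: every feasible route with $\mu_r \ge \mu$ satisfies $\mu_r \le g(\mu)$. This bounds $\mu^\star$ only when $\mu \le \mu^\star$. If $\mu^{(t)} > \mu^\star$, there is no feasible route to apply it to, so your inductive step yields nothing.
\item Already $\mu^{(1)}$ can be invalid. \texttt{RVLB}$(T_{\max})$ only sees solutions of expectation at least $T_{\max}$, so it may be large while some feasible route of smaller expectation has small variance.
\end{itemize}

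The correct invariant is $\mu^\star \le \max\{\mu^{(t)}, \mu^{(t+1)}\}$ for every $t$. Combined with antitonicity of $g$ and $g \le T_{\max} = \mu^{(0)}$, each odd iterate lies below both its neighbours. So it is the even iterates that form a nonincreasing sequence of valid bounds.

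Algorithm~\ref{algo:tbl} stops on a tolerance, and an antitone iteration can even oscillate. What is provably valid at termination is therefore the larger of the last two iterates. That value coincides with the returned $\mu_{\text{UB}}$ only at an exact fixed point.
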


\begin{proof}
By definition, $\mu_{\text{UB}}$ represents the (${1-\alpha}$)-quantile of the Normal distribution $\bar{\xi} \sim \mathcal{N}(T_{\max}, \sigma^2_{\text{LB}})$:
\[
\mathbb{P}(\bar{\xi} \leq \mu_{\text{UB}}) = 1 - \alpha.
\]
Since the normal distribution is symmetric with respect to its mean, we have:
\[
\mathbb{P}(\bar{\xi} \leq 2 \, T_{\max} - \mu_{\text{UB}}) = \alpha.
\]
Subtracting the quantity $T_{\max} - \mu_{\text{UB}}$ from both sides inside the probability term, it follows that:
\begin{equation}
\label{eq:quantile}
\mathbb{P}(\bar{\xi} - (T_{\max} - \mu_{\text{UB}}) \leq T_{\max}) = \alpha,
\end{equation}
where the mean of the Normal random variable $\bar{\xi} - (T_{\max} - \mu_{\text{UB}})$ is equal to $\mu_{\text{UB}}$.
Therefore, for any route $r$ with a route uncertain travel time $\xi_r \sim \mathcal{N}(\mu_r, \sigma^2_{\text{LB}})$ such that $\mu_r \leq \mu_{\text{UB}}$, it holds that:
\[
\delta_r \geq \alpha,
\]
which means that $r$ is feasible.
Thus, the probability $\delta_r$ achieves the minimum value when $\mu_r = \mu_{\text{UB}}$, where $\delta_r = \alpha$ as in~\eqref{eq:quantile}. Consequently, there is no route $r$ that satisfies:
\begin{align*}
\mu_r > \mu_{\text{UB}} \, \text{ and } \delta_r \geq \alpha,
\end{align*}
considering a route variance equal to $\sigma^2_{\text{LB}}$. This implies that $\mu_{\text{UB}}$ represents a valid upper bound on the route expectation.
\end{proof}

\subsubsection{Sedrakyan lifting}
The aim of this lifting is to strengthen the expressions~\eqref{eq:2STOPflow}, \eqref{eq:2STOPUB}, \eqref{eq:2STOPLB} and \eqref{eq:2STOPDepot}. Each route $r$ contained in a feasible first-stage solution must satisfy all the chance constraints~\eqref{eq:2chanceconstraints} which, according to~\cite{Hillier1967}, can be rephrased into the following non-linear constraints:

\begin{equation}
	\sum_{(i,j) \in A(r)} \mu_{ij} x_{ij} + q^{\mathcal{N}}_{\alpha} \sqrt{\sum_{(i,j) \in A(r)} \sigma^2_{ij} x_{ij}} \leq T_{\max},
	\label{eq:nlcc}
\end{equation}

where $q^{\mathcal{N}}_{\alpha}$ represents the $\alpha$-quantile of a standard Normal distribution $\mathcal{N}(0, 1)$ and is a non-negative quantity for $\alpha \geq 0.5$. The idea is to linearly relax inequality~\eqref{eq:nlcc} through the so-called Sedrakyan's inequality~\citep{Sedrakyan2018}:
\begin{equation*}
	\sum_{k=1}^t \frac{a_k^2}{b_k} \geq \frac{\left(\sum\limits_{k=1}^t a_k\right)^2}{\sum\limits_{k=1}^t b_k} \quad \, t \in \mathbb{N}^+, \quad a_1, \ldots, a_t \in \mathbb{R}, \quad b_1, \ldots, b_t \in \mathbb{R}^+.
\end{equation*}
Imposing $t = |A(r)|$, $b_k = 1$ and $a_k = \sigma_{ij} x_{ij}$, it follows that:
\begin{equation}
	\sum_{(i,j) \in A(r)} \sigma^2_{ij} x_{ij} \geq \frac{\left(\sum\limits_{(i,j) \in A(r)} \sigma_{ij} x_{ij}\right)^2}{|A(r)|}.
  \label{sed1}
\end{equation}
Taking the square root of both sides:
\begin{equation}
	\sqrt{\sum_{(i,j) \in A(r)} \sigma^2_{ij} x_{ij}} \geq \frac{\sum\limits_{(i,j) \in A(r)} \sigma_{ij} x_{ij}}{\sqrt{|A(r)|}}.
  \label{sed2}
\end{equation}
Then, constraint in~\eqref{eq:nlcc} can be relaxed by replacing the square root term in~\eqref{eq:nlcc} with the right-hand side of~\eqref{sed2}, giving:
\begin{equation*}
	\sum_{(i,j) \in A(r)} \mu_{ij} x_{ij} + q^{\mathcal{N}}_{\alpha} \, \frac{\sum\limits_{(i,j) \in A(r)} \sigma_{ij} x_{ij}}{\sqrt{|A(r)|}} \leq T_{\max}.
\end{equation*}
%
%
Aggregating the coefficients associated with the the $x_{ij}$ variables, we have:

\begin{equation}
	\sum_{(i,j) \in A(r)} \left(\mu_{ij} + q^{\mathcal{N}}_{\alpha} \, \frac{\sigma_{ij}}{\sqrt{|A(r)|}}\right) x_{ij} \leq T_{\max},
  \label{eq:lin_cc}
\end{equation}

Finally, we can replace~\eqref{eq:2STOPflow}, \eqref{eq:2STOPUB}, \eqref{eq:2STOPLB}, and \eqref{eq:2STOPDepot} with the following inequalities, respectively. In these constraints, the term $|A(r)|$ has been replaced with $(C_{\text{UB}}^1 + 1)$. Since $C_{\text{UB}}^1$ bounds the number of customers that a route can visit and the number of arcs in a route equals the number of visited nodes plus one, $(C_{\text{UB}}^1 + 1)$ provides a valid upper bound on the number of arcs in a single route.
\begin{align*}
	\quad & \sum_{(k,j) \in \hat{A}} z_{kj} - \sum_{(i,k) \in \hat{A}} z_{ik} = \sum_{(k,j) \in \hat{A}} \left( \mu_{kj} + q^{\mathcal{N}}_{\alpha} \frac{\sigma_{kj}}{\sqrt{(C_{\text{UB}}^1 + 1)}} \right) x_{kj}, \quad \, k \in \hat{N},\\
	\quad & z_{ij} \leq \left(T_{\max} - \mu_{jn} - q^{\mathcal{N}}_{\alpha} \frac{\sigma_{jn}}{\sqrt{(C_{\text{UB}}^1 + 1)}}\right) \, x_{ij}, \quad \, (i,j) \in \hat{A},\\
	\quad & z_{ij} \geq \left(\mu_{1i} + \mu_{ij} + q^{\mathcal{N}}_{\alpha} \frac{\sigma_{1i} + \sigma_{ij}}{\sqrt{(C_{\text{UB}}^1 + 1)}}\right) \, x_{ij}, \quad \, (i,j) \in \hat{A},\\
	\quad & z_{1k} = \left(\mu_{1k} + q^{\mathcal{N}}_{\alpha} \frac{\sigma_{1k}}{\sqrt{(C_{\text{UB}}^1 + 1)}} \right) x_{1k}, \quad \, k \in \hat{N},
\end{align*}

\subsection{The algorithm}
\label{algorithm2}

This section presents our implementation of the ILM for solving the formulation $\mathscr{F}$. We refer to it as the two-index Integer L-shaped Method (2ILM) since it is based on the two-index mathematical formulation of the first-stage problem introduced in Section~\ref{sec:problem}.

The seperation algorithms have been developed inside a mixed-integer solver's environment as a callback function that is called for each node of the branch-and-bound tree. At each call, we get a solution ($\bar{x}$, $\bar{y}$, $\bar{z}$) and we build a graph $\mathcal{G}(\bar{x}) = (N, \bar{A})$, where set $\bar{A} = \{(i, j) \in \hat{A} : \bar{x}_{ij} > 0\}$ includes only the arcs of positive value in the solution. The set of routes $\mathcal{R}(\bar{x})$ are enumerated using a Depth First Search (DFS) algorithm. Next, we perform the separation of feasibility cuts, optimality cuts, Upper-Bounding Functionals, and subtour elimination constraints. We detail their separation as follows.



\paragraph{Feasibility cuts.} For each $r=\{1, i_1,\ldots, i_k, n\} \in \mathcal{R}(\bar{x})$, we start by computing $\delta_r$ and $\delta_{UB}$. If $\delta_{UB} < \alpha$, which indicates that no feasible route can visit the customers of $\hat{N}(r)$, then we add inequality \eqref{setIneq} if violated. The inequality imposes that the set of nodes in $\hat{N}(r)$ must be visited by at least two routes.

\begin{equation}
\label{setIneq}
  \sum_{i \in \hat{N}(r)} \sum_{j \in \hat{N}(r)} x_{ij} \leq |\hat{N}(r)| - 2.
\end{equation}

Otherwise, if route $r$ is infeasible ($\delta_r < \alpha$), then we had inequality \eqref{routeIneq} to forbid the route.

\begin{equation}
\label{routeIneq}
\sum_{(i,j) \in \hat{A}(r)} x_{ij} \leq \sum_{i \in \hat{N}(r)\setminus\{i_1,i_k\}} y_i.
\end{equation}

We can strengthen inequality \eqref{routeIneq} by considering feasible sub-paths of $r$. Let $w=(i_u,\ldots,i_v)$ be a sub-path of $r$, with $1 \leq u < v \leq k$, such that $r_w$ is feasible. Let sets $L(w)$ and $R(w)$ represent the set of nodes that can be added without violating any chance constraint before $u$ and at the end of $w$, respectively. Then, the following inequalities can be added:
\begin{align}
\sum_{(i,j) \in A(w)} x_{ij} & \leq \sum_{k \in N(w)\setminus\{i_u,i_v\}} y_{k} + \sum_{j \in L(w)} x_{j i_u}. \label{subpathIneq1} \\
\sum_{(i,j) \in A(w)} x_{ij} &\leq \sum_{k \in N(w)\setminus\{i_u,i_v\}} y_{k} + \sum_{j \in R(w)} x_{i_v j}. \label{subpathIneq2}
\end{align}

The inequality \eqref{subpathIneq1} requires that $w$ be connected on the left with with the nodes $L(w)$ to be feasible. Similarly, inequality \eqref{subpathIneq2} requires that $w$ be connected on the right with with the nodes $R(w)$ to be feasible. Note that the source node 1 is part of $L(w)$, and $n$ is part of $R(w)$. The separation of these inequalities is done for each customer sub-path $w$ of $r$.

\paragraph{Optimality cuts.}
We use one of the three families optimality cuts (Section~\ref{sec:algorithm2}). For solution-based optimality cuts, we add the cut associated with the entire first-stage solution if violated. For route-based optimality cuts, we add the cuts associate with each route in the solution whenever they are violated. For customer-based optimality cuts, we add the $\phi = 3$ most violated cuts of each route $r \in \mathcal{R}(\bar{x})$.

\paragraph{Upper-Bounding Functionals.}
For each $r \in \mathcal{R}(\bar{x})$, we compute the upper bound $\mathcal{Q}_{\text{UB}}(r)$, and add an inequality inequality~\eqref{eq:ubf} if violated.

\paragraph{Subtour elimination constraints.}
All the elementary cycles inside $\mathcal{G}(\bar{x})$ are computed~\citep{Hawick2008} and the associated SECs are checked for violation:
\begin{equation}
\label{eq:2secs}
\sum_{i \in U} \sum_{j \in U} x_{ij} \leq \sum_{i \in U} y_i - y_k, \quad \, U \subseteq \hat{N},\; k \in U.
\end{equation}

\allowdisplaybreaks{
	\begin{algorithm}
		\KwData{$T_{\max}$, $\mu_{ij}$, $\sigma^2_{ij}$, with $i,j \in \{1, \dots, n\}$, $\bar{x}$}
		$\mathcal{G}(\bar{x}) \leftarrow$ \texttt{buildGraph}($\bar{x}$)\;
		$\mathcal{R}(\bar{x}) \leftarrow$ \texttt{buildRoutes}($\mathcal{G}(\bar{x})$, $1$, $n$)\;

		\For{route $r \in \mathcal{R}(\bar{x})$}{
			$G_{\mu}, G_{\sigma^2} \leftarrow$ \texttt{buildSubgraphs}($r$, $\mu_{ij}$, $\sigma^2_{ij}$)\;
			$H^{\mu}_{\text{LB}}, H^{\sigma^2}_{\text{LB}} \leftarrow$ \texttt{calculateProbBounds}($G_{\mu}, G_{\sigma^2}$)\;
			$\xi_{\text{LB}} \leftarrow \mathcal{N}\big(H^{\mu}_{\text{LB}}, H^{\sigma^2}_{\text{LB}}\big)$\;
			$\delta_{\text{UB}} \leftarrow \mathbb{P}(\xi_{\text{LB}} \leq T_{\max})$\;

			\If{$\delta_r < \alpha$}{
  \For(\tcc*[f]{Feasibility cuts}){sub-path $w \in r$} {
					\If{$1 \notin w$ and $n \notin w$}{
						\If{$\delta_{r_w} \geq \alpha$}{
							$L(w), R(w) \leftarrow$ \texttt{buildSets}($w$, $N$, $T_{\max}$)\;
							check inequality~\eqref{subpathIneq1} associated with $w$ and $L(w)$\;
							check inequality~\eqref{subpathIneq2} associated with $w$ and $R(w)$\;
						}
					}
				}

				\If{$\delta_{\text{UB}} < \alpha$}{
					check inequality~\eqref{setIneq} associated with $\hat{N}(r)$\;
				}
				\Else
				{
					check inequality~\eqref{routeIneq} associated with $r$\;
				}
			}
			\Else
			{
				$\mathcal{Q}_{\text{UB}}(r) \leftarrow \delta_{\text{UB}} \cdot \sum_{k \in \hat{N}(r)}p_k$\;
  				check $\phi$ optimality cuts associated with $r$\tcc*{Optimality cuts}

  			check the upper-bounding functional associated with $\hat{N}(r)$\tcc*{UBFs}
  			}
  		}

		\For(\tcc*[f]{Subtour elimination constraints}){cycle $c \in \mathcal{G}(\bar{x})$}{
			check inequalities~\eqref{eq:2secs} associated with $c$\;
		}

		\caption{\texttt{2ILM} callback.}
		\label{algo:2ILM}
	\end{algorithm}
}
%

\paragraph{Valid inequalities.}
One of the three families of valid inequalities introduced in Section~\ref{ssec:vi} is inserted into the formulation of the first-stage problem.

\paragraph{Constraints Liftings.}
One of the two constraints liftings reported in Section~\ref{ssec:cl} is adopted.

\subsubsection{Implementation}
The pseudocode of the procedure callback is reported in Algorithm~\ref{algo:2ILM} and makes use of the following procedures:
\begin{inparaenum}[(i)]
  \item \texttt{buildGraph}: builds the graph $\mathcal{G}(\bar{x})$ from solution $\bar{x}$;
	\item \texttt{buildRoutes}: builds the set $\mathcal{R}(\bar{x})$ of routes from the graph $\mathcal{G}(\bar{x})$ with the DFS algorithm;
	\item \texttt{buildSubgraphs}: builds the subgraphs $G_{\mu}$ and $G_{\sigma^2}$ from sets $N(r)$ and $A(r)$;
	\item \texttt{calculateProbBounds}: calculate the Helsgaun lower bounds on the subgraphs $G_{\mu}$ and $G_{\sigma^2}$; and
	\item \texttt{buildSets}: build the sets $L(w)$ and $R(w)$ from the customers sub-path $w$.
\end{inparaenum}

\bigskip
\noindent
Further details about the separation methods adopted for the feasibility cuts and subtour elimination constraints are reported~\cite{Guastalla2025}.

We finally report, in the supplementary material, a three-index adaptation of the 2ILM, which we call the three-index Integer L-shaped (3ILM), based on the three-index formulation presented therein.



\section{Computation experiments}
\label{sec:analysis}

This section presents a quantitative assessment of the proposed formulation and algorithm. We begin by reporting the results of preliminary experiments designed to evaluate the effect of the UBFs, followed by a detailed analysis of the algorithm’s computational performance, focusing on solution quality, computational time, and scalability. Additionally, the supplementary material presents a comparison between the two-index and three-index formulations for the first-stage problem of the STOP. It is worth noting that, in these experiments, the solver was provided with a lower bound computed by a simple metaheuristic to improve efficiency, and parameter $\alpha$ was set to 0.95 to ensure that only highly reliable routes were considered. Finally, we conclude the section with an analysis of the impact of parameter $\alpha$ under different values.
\begin{center}
\begin{tabular}{rrrrrrrrr}
\toprule
\multicolumn{3}{c}{\emph{Small}}
& \multicolumn{3}{c}{\emph{Medium}}
& \multicolumn{3}{c}{\emph{Large}} \\
\cmidrule(lr){1-3} \cmidrule(lr){4-6} \cmidrule(l){7-9}
\textbf{Set} & \textbf{$|N|$} & \textbf{$|A|$}
& \textbf{Set} & \textbf{$|N|$} & \textbf{$|A|$}
& \textbf{Set} & \textbf{$|N|$} & \textbf{$|A|$} \\
\midrule
1 & 32  & 992
& 5 & 66  & 4290
& 4 & 100 & 9900  \\

2 & 21  & 420
& 6 & 64  & 4032
& 7 & 102 & 10302 \\

3 & 33  & 1056
& - & - & -
& - & - & - \\

\bottomrule
\end{tabular}
\captionof{table}{A summary of the TOP instances benchmark sets.}
\label{tab:instances}
\end{center}
All computational results are based on the TOP reference dataset available online at \url{https://www.mech.kuleuven.be/en/cib/op}.
This benchmark set, proposed by~\cite{CHAO1996464}, is composed of 387 instances divided into 7 sets, with the number of nodes ranging from 21 to 102. Each set differs in the number of available routes used to serve customers (from 2 to 4) and in the value of the maximum route duration. The instances consist of nodes represented by two-dimensional coordinates in a Euclidean plane. To present the results, the TOP instances were grouped into three benchmark categories.
The \emph{small} benchmark set is composed of the instances belonging to sets 1, 2 and 3, for a total of 147 instances.
The \emph{medium} benchmark set is composed of the instances belonging to sets 5 and 6, for a total of 120 instances.
The \emph{large} benchmark set is composed of the instances belonging to sets 4 and 7, for a total of 120 instances.
A brief summary of the instance sets is reported in Table~\ref{tab:instances}.
%
%
To represent the stochastic travel times between two nodes $i$ and $j$, the random variables $\xi_{ij} \sim \mathcal{N}(\mu_{ij}, \sigma_{ij}^2)$ were defined by setting:
\[\mu_{ij} = t_{ij} \quad \text{and} \quad \sigma_{ij}^2 = \sqrt{t_{ij}},\]
where the quantity $t_{ij}$ is the Euclidean distance between nodes $i$ and $j$.

\subsection{Computational environment}
\label{ssec:environment}

The 2ILM was implemented in C++ using CPLEX 22.1.1 Concert Technology for the callback implementation and run in single-threaded mode with a time limit of 1200 seconds. The code was compiled on Ubuntu 22.04. Experiments were carried out on a 64-bit Linux machine equipped with an AMD EPYC 7532 (Zen 2) processor at 2.40 GHz and 24 GB of RAM. CPLEX built-in cuts were used in all experiments. 

\subsection{The impact of the upper-bounding functionals}
\label{ubfc}

This section provides an assessment of the impact of the UBFs introduced in Section~\ref{ssec:ubf}. Table~\ref{tab:ubf} presents the results of 2ILM without UBFs separation (left) and with UBFs separation (right). 
The results are further subdivided by instance sizes.

\begin{center}
\begin{tabular}{lrrrrrrrrr}
\toprule
  & & \multicolumn{4}{c}{\textbf{2ILM without UBFs}}  & \multicolumn{4}{c}{\textbf{2ILM with UBFs}}  \\ \vspace*{1pt}
  &  \# & \textbf{Opt} & \textbf{Cpu(s)} & \textbf{Nodes} & \textbf{Gap(\%)} & \textbf{Opt} & \textbf{Cpu(s)} & \textbf{Nodes} & \textbf{Gap(\%)} \\ \cmidrule(r){1-2} \cmidrule(lr){3-6} \cmidrule(l){7-10}
\emph{Small}  & 147 & 104  & 51.3 & 72,862 & 11.92  & 110  & 351.6  & 67,515  & 7.48 \\
\emph{Medium} & 120 & 52 & 18.9 & 55,923 & 25.69  & 60 & 650.2  & 57,395  & 11.16  \\
\emph{Large}  & 120 & 31 & 24.9 & 34,343 & 17.03 & 35 & 878.0  & 31,462  & 11.62  \\ \cmidrule(r){1-2} \cmidrule(lr){3-6} \cmidrule(l){7-10}
\textbf{ALL}  & 387 & 187  & 75.9 & 55,666 & 18.88  & 205  & 607.4  & 53,198  & 10.62  \\ \bottomrule
\end{tabular}
\captionof{table}{Computational results without UBFs separation (left) and with UBFs separation (right).}
\label{tab:ubf}
\end{center}

The columns \#, Opt, Cpu(s), Nodes and Gap(\%) report the number of instances, the number of optimal instances solved, the average running time (in seconds) for the instances solved to optimality, the average number of explored branching nodes and the average integrality gap, respectively.
As shown in the table, the inclusion of UBFs significantly improves the performance of the 2ILM. In terms of the number of optimally solved instances, the 2ILM with UBFs consistently outperforms its counterpart, proving the optimality for 18 additional instances. Regarding the average running time to prove optimality, the 2ILM without UBFs is faster because it can only solve the easiest instances. Finally, when considering the average integrality gap, the 2ILM with UBFs achieves substantially better results.

\subsection{The computational results of the 2ILM}
\label{results}

Tables~\ref{tab:small},~\ref{tab:medium} and~\ref{tab:large} report the computational results for the small, medium and large benchmark sets, respectively. Table~\ref{tab:all} gives a summary of the computational results for all the instances. The tables are organised in four parts. The first and the second parts report the selected family of optimality cuts and the constraints liftings adopted, respectively. The third and the fourth parts describe the statistics for the 2ILM method and for the upper bounds, respectively. Columns Opt, Cpu(s), Gap(\%), Nodes and Cuts report the number of instances solved to optimality, the average running time (in seconds) for the instances solved to optimality, the average integrality gap, the average number of explored branching nodes and the average number of cuts added, respectively.
\begin{center}
    \begin{tabular}{@{}llllllrrrrrrrr@{}}
        \toprule
        \multicolumn{3}{c}{\textbf{Family}}  & \multicolumn{3}{c}{\textbf{Lifting}}  & \multicolumn{5}{c}{\textbf{2ILM}}  & \multicolumn{3}{c}{\textbf{Upper Bounds}} \vspace*{1pt}  \\
        \textbf{1} & \textbf{2} & \textbf{3} & \textbf{A} & \textbf{B} & \textbf{C} & \textbf{Opt} & \textbf{Cpu(s)} & \textbf{Gap(\%)} & \textbf{Nodes} & \textbf{Cuts} & \textbf{$\text{P}^m_{\text{UB}}\text{(\%)}$} & \textbf{$\text{C}^m_{\text{UB}}\text{(\%)}$} & \textbf{Cpu(s)} \\ \cmidrule(r){1-3} \cmidrule(lr){4-6} \cmidrule(lr){7-11} \cmidrule(l){12-14}

        \checkmark  &  &  & \checkmark  &  & & 107  & 43.53  & 7.46  & 66,270  & 736  & 7.13  & 22.50  & 0.40  \\
        \checkmark  &  &  &  & \checkmark  & & 102  & 56.83  & 11.75  & 71,623  & 816  & 8.29 & 23.99  & 0.37  \\
        \checkmark  &  &  &  &  & \checkmark  & 86 & 28.88  & 24.27  & 92,265  & 1,345  & 9.26 & 27.79  & 0.38  \\
        & \checkmark  &  & \checkmark  &  &  & 109  & 61.43  & 7.82 & 65,742 & 672 & 7.37 & 22.55  & 0.40  \\
        & \checkmark  &  &  & \checkmark  & & 101  & 48.78  & 11.57  & 68,848  & 802  & 8.09 & 23.90  & 0.34 \\
        & \checkmark  &  &  &  & \checkmark  & 86 & 33.42  & 24.28  & 90,336  & 1,350  & 9.22 & 27.79  & 0.35  \\
        &  & \checkmark  & \checkmark  &  &  & 110 & 65.08 & 7.48 & 67,515  & 643 & 7.39 & 22.46 & 0.39  \\
        &  & \checkmark  &  & \checkmark  &  & 102  & 36.24  & 11.60  & 71,526  & 762 & 8.28 & 23.99  & 0.34 \\
        &  & \checkmark  &  &  & \checkmark  & 87 & 51.87  & 25.01  & 94,006  & 1,286 & 9.37 & 27.75  & 0.37  \\ \bottomrule
    \end{tabular}
	\captionof{table}{Results for the 147 small instances. The families of optimality cuts are: (1) Solution-based, (2) Route-based, (3) Customer-based. The constraints liftings are: (A) Sedrakyan, (B) Time budget, (C) No lifting.}
	\label{tab:small}
\end{center}

Column $\text{P}^m_{\text{UB}}\text{(\%)}$ reports the average relative differences between the profit upper bound and the value of an optimal solution for the instances solved to optimality. Column $\text{C}^m_{\text{UB}}\text{(\%)}$ reports the average relative differences between the customers upper bound and the number of visited customers in an optimal solution for the instances solved to optimality. Finally, the two columns Cpu(s) describe the average running time (in seconds) to prove optimality (for the 2ILM method) and the average running time (in seconds) for computing the upper bounds, respectively.

\begin{center}
    \begin{tabular}{llllllrrrrrrrr}
        \toprule
        \multicolumn{3}{c}{\textbf{Family}}  & \multicolumn{3}{c}{\textbf{Lifting}}  & \multicolumn{5}{c}{\textbf{2ILM}}  & \multicolumn{3}{c}{\textbf{Upper Bounds}} \vspace*{1pt}  \\
        \textbf{1} & \textbf{2} & \textbf{3} & \textbf{A} & \textbf{B} & \textbf{C} & \textbf{Opt} & \textbf{Cpu(s)} & \textbf{Gap(\%)} & \textbf{Nodes} & \textbf{Cuts} & \textbf{$\text{P}^m_{\text{UB}}\text{(\%)}$} & \textbf{$\text{C}^m_{\text{UB}}\text{(\%)}$} & \textbf{Cpu(s)} \\ \cmidrule(r){1-3} \cmidrule(lr){4-6} \cmidrule(lr){7-11} \cmidrule(l){12-14}

        \checkmark  &  &  & \checkmark  &  & & 57 & 38.88 & 10.96 & 55,956  & 1,506  & 12.83 & 21.05  & 7.08 \\
        \checkmark  &  &  &  & \checkmark &  & 52 & 17.81 & 25.94  & 52,645 & 1,588 & 16.44  & 25.07  & 7.27 \\
        \checkmark  &  &  &  &  & \checkmark  & 50 & 30.74 & 55.82  & 56,769  & 2,682 & 24.42  & 32.90  & 9.01 \\
        & \checkmark  &  & \checkmark  &  &  & 59 & 72.33  & 11.26  & 59,025 & 1,235  & 13.49  & 20.37  & 6.32 \\
        & \checkmark  &  &  & \checkmark  & & 52 & 23.29 & 25.73  & 55,482  & 1,515 & 16.37  & 25.07  & 6.20  \\
        & \checkmark  &  &  &  & \checkmark  & 49 & 18.77 & 55.35  & 57,416  & 2,535  & 22.94  & 32.35  & 8.80 \\
        &  & \checkmark  & \checkmark  &  &  & 60  & 81.00  & 11.16  & 57,395  & 1,211 & 13.37  & 20.09 & 6.79  \\
        &  & \checkmark  &  & \checkmark  &  & 52 & 25.30 & 25.73  & 54,983  & 1,525  & 16.34  & 25.07  & 7.02  \\
        &  & \checkmark  &  &  & \checkmark  & 48 & 1.97 & 54.79  & 55,946  & 2,777 & 22.18  & 31.24  & 8.39 \\ \bottomrule
    \end{tabular}
	\captionof{table}{Results for the 120 medium instances. The families of optimality cuts are: (1) Solution-based, (2) Route-based, (3) Customer-based. The constraints liftings are: (A) Sedrakyan, (B) Time budget, (C) No lifting.}
	\label{tab:medium}
\end{center}

\begin{center}
    \begin{tabular}{llllllrrrrrrrr}
        \toprule
        \multicolumn{3}{c}{\textbf{Family}}  & \multicolumn{3}{c}{\textbf{Lifting}}  & \multicolumn{5}{c}{\textbf{2ILM}}  & \multicolumn{3}{c}{\textbf{Upper Bounds}} \vspace*{1pt}  \\
        \textbf{1} & \textbf{2} & \textbf{3} & \textbf{A} & \textbf{B} & \textbf{C} & \textbf{Opt} & \textbf{Cpu(s)} & \textbf{Gap(\%)} & \textbf{Nodes} & \textbf{Cuts} & \textbf{$\text{P}^m_{\text{UB}}\text{(\%)}$} & \textbf{$\text{C}^m_{\text{UB}}\text{(\%)}$} & \textbf{Cpu(s)} \\ \cmidrule(r){1-3} \cmidrule(lr){4-6} \cmidrule(lr){7-11} \cmidrule(l){12-14}

        \checkmark  &  &  & \checkmark  &  &  & 34 & 74.01 & 11.33 & 37,485  & 989  & 9.08  & 25.12  & 167.95 \\
        \checkmark  &  &  &  & \checkmark  &  & 31 & 20.80 & 16.96  & 34,583  & 1,114 & 10.74  & 26.91  & 137.89 \\
        \checkmark  &  &  &  &  & \checkmark  & 30 & 92.39 & 27.54  & 38,043  & 1,506 & 11.71  & 28.64  & 152.62 \\
        & \checkmark  &  & \checkmark  &  &  & 33 & 49.48 & 11.35  & 34,763  & 1,060  & 9.08  & 24.81 & 162.97 \\
        & \checkmark  &  &  & \checkmark  & & 31 & 21.49 & 17.10  & 33,348  & 1,153 & 10.74  & 26.58  & 119.18 \\
        & \checkmark  &  &  &  & \checkmark  & 30 & 55.36 & 27.41  & 37,601  & 1,512  & 11.76  & 28.34  & 126.02 \\
        &  & \checkmark  & \checkmark  &  &  & 35  & 90.35  & 11.62  & 31,462 & 1,062 & 9.89 & 25.27  & 138.41 \\
        &  & \checkmark  &  & \checkmark  & & 31 & 24.12 & 17.00  & 31,760  & 1,139  & 10.69  & 26.58  & 105.44  \\
        &  & \checkmark  &  &  & \checkmark  & 29 & 30.78 & 27.35  & 35,811  & 1,519 & 11.14  & 28.37  & 111.82 \\ \bottomrule
    \end{tabular}
	\captionof{table}{Results for the 120 large instances. The families of optimality cuts are: (1) Solution-based, (2) Route-based, (3) Customer-based. The constraints liftings are: (A) Sedrakyan, (B) Time budget, (C) No lifting.}
	\label{tab:large}
\end{center}

When customer-based optimality cuts are combined with Sedrakyan lifting, the 2ILM method is able to solve approximately 75\%, 50\%, and 30\% of the small, medium, and large benchmark sets, respectively.

The average CPU running time required to prove optimality is similar across each benchmark set, even though the number of instances solved to optimality differs. This is because the majority of the solved instances are characterised by a lower amount of time budget $T_{\max}$. The average integrality gap appears slightly lower for the small benchmark set, while it tends to increase for the medium and large benchmark sets. The average number of explored nodes and cuts are comparable across all benchmark sets. Regarding the upper bounds section of the table, the average profit upper bound and customers upper bound gaps are also quite similar across all the benchmark sets, for the same reason as discussed above for the average CPU running time. Finally, the average CPU time for computing the upper bounds increases with the size of the instances, as expected.

\begin{center}
    \begin{tabular}{llllllrrrrrrrr}
        \toprule
        \multicolumn{3}{c}{\textbf{Family}}  & \multicolumn{3}{c}{\textbf{Lifting}}  & \multicolumn{5}{c}{\textbf{2ILM}}  & \multicolumn{3}{c}{\textbf{Upper Bounds}} \vspace*{1pt}  \\
        \textbf{1} & \textbf{2} & \textbf{3} & \textbf{A} & \textbf{B} & \textbf{C} & \textbf{Opt} & \textbf{Cpu(s)} & \textbf{Gap(\%)} & \textbf{Nodes} & \textbf{Cuts} & \textbf{$\text{P}^m_{\text{UB}}\text{(\%)}$} & \textbf{$\text{C}^m_{\text{UB}}\text{(\%)}$} & \textbf{Cpu(s)} \\ \cmidrule(r){1-3} \cmidrule(lr){4-6} \cmidrule(lr){7-11} \cmidrule(l){12-14}

        \checkmark  &  &  & \checkmark  &  & & 198  & 47.42 & 10.39 & 54,146  & 1,053  & 8.46  & 22.68  & 54.43  \\
        \checkmark  &  &  &  & \checkmark  & & 185  & 39.83 & 18.82  & 54,253  & 1,148 & 9.85 & 24.59  & 45.15  \\
        \checkmark  &  &  &  &  & \checkmark  & 166  & 40.92 & 35.59  & 64,445  & 1,810  & 11.99  & 28.71  & 50.26  \\
        & \checkmark  &  & \checkmark  &  &  & 201  & 62.67 & 10.60  & 54,053  & 967  & 8.72 & 22.50  & 52.65  \\
        & \checkmark  &  &  & \checkmark  &  & 184  & 36.98 & 18.74  & 53,696  & 1,132  & 9.71 & 24.48  & 39.01  \\
        & \checkmark  &  &  &  & \checkmark  & 165  & 33.06 & 35.49  & 63,776  & 1,768 & 11.64  & 28.54  & 41.94  \\
        &  & \checkmark  & \checkmark  &  &  & 205 & 74.05  & 10.62  & 53,198 & 949 & 8.87 & 22.48 & 45.17  \\
        &  & \checkmark  &  & \checkmark  & & 185  & 31.14 & 18.74  & 54,066  & 1,115 & 9.82 & 24.54  & 35.00 \\
        &  & \checkmark  &  &  & \checkmark  & 164  & 33.54 & 35.58  & 64,159  & 1,821  & 11.41  & 28.33  & 37.42  \\ \bottomrule
    \end{tabular}
	\captionof{table}{Results for all 387 instances. The families of optimality cuts are: (1) Solution-based, (2) Route-based, (3) Customer-based. The constraints liftings are: (A) Sedrakyan, (B) Time budget, (C) No lifting.}
	\label{tab:all}
\end{center}

For the complete set of instances, the algorithm is able to solve about 53\% of them when using the customer-based optimality cuts in combination with the Sedrakyan lifting. The customer-based optimality cuts appears to yield the best results in terms of the number of optimal solutions, while the Sedrakyan and Time-budget liftings clearly outperform the model without liftings on nearly every measure.

\subsection{The impact of parameter \texorpdfstring{$\alpha$}{alpha}}
\label{alpha}

We conclude the computational analysis by examining the impact of parameter $\alpha$ for different values of stochasticity.
Table~\ref{tab:alpha} reports the results obtained on the complete TOP benchmark set by varying the value of $\alpha$. The table follows the same structure as those presented in Section~\ref{results}.

\begin{center}
\begin{tabular}{crrrrrrrr}
\toprule
 & \multicolumn{5}{c}{\textbf{2ILM}}  & \multicolumn{3}{c}{\textbf{Upper Bounds}} \\
$\alpha$ & \textbf{Opt} & \textbf{Cpu(s)} & \textbf{Gap(\%)} & \textbf{Nodes} & \textbf{Cuts} & \multicolumn{1}{c}{\textbf{$\text{P}^m_{\text{UB}}\text{(\%)}$}} & \multicolumn{1}{c}{\textbf{$\text{C}^m_{\text{UB}}\text{(\%)}$}} & \multicolumn{1}{c}{\textbf{Cpu(s)}} \\ \cmidrule(r){1-1} \cmidrule(lr){2-6} \cmidrule(l){7-9}
0.75 & 159  & 48.48 & 24.35  & 68,933 & 1,446.69 & 12.34  & 20.54  & 56.42 \\
0.85 & 173  & 43.64 & 17.13  & 62,147 & 1,197.11 & 10.49  & 21.92  & 41.78 \\
0.90 & 188  & 57.43 & 13.88  & 56,429 & 1,054.87 & 9.66 & 22.35  & 48.95 \\
0.95 & 205  & 74.05 & 10.62  & 53,198 & 949.29  & 8.87 & 22.48  & 45.17 \\
0.99 & 218  & 56.53 & 9.00 & 48,283 & 849.98  & 8.31 & 24.29  & 53.61 \\ \bottomrule
\end{tabular}
\captionof{table}{Computational results for all instances by varying parameter $\alpha$.}
\label{tab:alpha}
\end{center}

As shown in Table~\ref{tab:alpha}, increasing $\alpha$ appears to simplify the problem. Specifically, higher values of $\alpha$ lead to a larger number of instances being solved to optimality and to improved solution quality, as reflected by lower average gaps. Furthermore, the trends in the average number of explored nodes and added cuts indicate that larger values of $\alpha$ generally result in fewer explored nodes and a reduced number of cuts. Regarding the average solution time, higher values of $\alpha$ generally lead to longer times to certify optimality, with the exception of $\alpha = 0.99$. Finally, when examining the upper-bound part of the table, larger values of $\alpha$ yield tighter bounds on the expected profit but looser bounds on the average number of served customers.

\section{Conclusions}
\label{sec:end}

This paper studied the Stochastic Team Orienteering Problem (STOP), an extension of the Team Orienteering Problem (TOP) that addresses uncertainty in travel times. We modelled the STOP as a two-stage stochastic integer programming problem and formulated a two-index mathematical model for the first-stage problem.
We implemented an exact method based on the Integer L-shaped method to solve the STOP. Such a method adopts a new set of optimality cuts aimed to linearise the objective function which is expressed as the expected profit collected by the selected routes. We introduced a set of valid inequalities and constraints liftings to tighten the mathematical formulation of the first-stage problem. We conducted a comparative analysis to assess the impact of the Upper-Bounding Functionals (UBFs) showing that their inclusion significantly affects the performance of the algorithm. Computational results on the TOP benchmark dataset are presented to evaluate the effectiveness of the two-index formulation. Finally, the study examines the algorithm’s performance under different levels of stochasticity.



\bibliographystyle{cas-model2-names}

\bibliography{cas-refs}

\end{document}